\pgfplotsset{compat=1.10}
\DeclareMathOperator{\dive}{div}
\DeclareMathOperator{\per}{Per}
\DeclareMathOperator{\dist}{dist}
\def\ds{\displaystyle}
\def\eps{{\varepsilon}}
\def\N{\mathbb{N}}
\def\O{\Omega}
\def\R{\mathbb{R}}
\def\A{\mathcal{A}}
\def\mfA{\mathfrak{A}}
\def\HH{\mathcal{H}}
\def\M{\mathcal{M}}
\newcommand{\be}{\begin{equation}}
\newcommand{\ee}{\end{equation}}
\newcommand{\bib}[4]{\bibitem{#1}{\sc#2: }{\it#3. }{#4.}}
\newcommand{\cp}{\mathop{\rm cap}\nolimits}
\newcommand{\res}{\mathbin{\vrule height 1.6ex depth 0pt width 0.13ex\vrule height 0.13ex depth 0pt width 1.0ex}}
\newcommand{\sm}{\setminus}
\newcommand{\opt}{\mathrm{opt}}
\newcommand{\loc}{\mathrm{loc}}
\newcommand{\mathbbmm}[1]{\text{\usefont{U}{bbm}{m}{n}#1}}
\newcommand{\ind}{\mathbbmm{1}}
\numberwithin{equation}{section}
\theoremstyle{plain}
\newtheorem{theo}{Theorem}[section]
\newtheorem{lemm}[theo]{Lemma}
\newtheorem{coro}[theo]{Corollary}
\newtheorem{prop}[theo]{Proposition}
\newtheorem{defi}[theo]{Definition}
\theoremstyle{remark}
\newtheorem{rema}[theo]{Remark}
\newtheorem{exam}[theo]{Example}
\def\XXint#1#2#3{{\setbox0=\hbox{$#1{#2#3}{\int}$ }
\vcenter{\hbox{$#2#3$ }}\kern-.6\wd0}}
\title[Shape optimization problems in control form]{Shape optimization problems in control form}
\author[G. Buttazzo]{Giuseppe Buttazzo}
\author[F.P. Maiale]{Francesco Paolo Maiale}
\author[B. Velichkov]{Bozhidar Velichkov}
\date{}
\begin{document}

\maketitle

\hfill{\it Dedicated to the memory of Claudio Baiocchi}

\begin{abstract}
We consider a shape optimization problem written in the optimal control form: the governing operator is the $p$-Laplacian in the Euclidean space $\R^d$, the cost is of an integral type, and the control variable is the domain of the state equation. Conditions that guarantee the existence of an optimal domain will be discussed in various situations. It is proved that the optimal domains have a finite perimeter and, under some suitable assumptions, that they are open sets. A crucial difference is between the case $p>d$, where the existence occurs under very mild conditions, and the case $p\le d$, where additional assumptions have to be made on the data.
\end{abstract}

\bigskip
\noindent\textbf{Keywords:} shape optimization, optimal control, finite perimeter, $p$-Laplacian.

\noindent\textbf{2010 Mathematics Subject Classification:} 49Q10, 49A15, 49A50, 35J20, 35D10.

\section{Introduction}

Let $D\subset\R^d$ be an open set of finite measure; along all the paper this is considered as fixed. For every $\O\subset D$, which will be our control variable, the PDE which provides the state variable $u_\O$ of the system under observation is written as
\be\label{e:stateq}
-\Delta_pu_\O=f\quad\text{in }\O,\qquad u_\O=0\quad\text{on }\overline D\sm\O.
\ee
Here $p>1$ is given, $\Delta_p$ is the $p$-Laplacian
$$\Delta_pu=\dive\big(|\nabla u|^{p-2}\nabla u\big),$$
and $f\in W^{-1,p'}(D)$ is a prescribed right-hand side. Equivalently, $u_\O$ can be characterized as the unique solution of the variational minimum problem
\be\label{e:varpb}
\min\left\{\int_D\left(\frac1p|\nabla u|^p-f(x)u\right)\,dx\ :\ u\in W^{1,p}_0(D),\ u=0\hbox{ on }D\sm\O\right\}.
\ee
We consider shape optimization problems of the form
\be\label{e:minpb}
\min\big\{J(u_\O)\ :\ \O\text{ open},\ \O\subset D\big\},
\ee
for cost functionals $J$ of the form
\begin{equation}\label{e:definition-of-J}
J(u)=\int_D j\big(x,u\big)\,dx,
\end{equation}
where $j:D\times\R\to\R$ is a given integrand with $j(x,\cdot)$ lower semicontinuous. 
Our model case is 
$$j(x,s)=-g(x)s+\lambda\ind_{(0,+\infty)}(s),$$
where the function $g:D\to\R$ and the constant $\lambda>0$ are given. In this case,
\begin{equation}\label{e:modelJ}
J(u_\O)=-\int_Dg(x)u_\O\,dx+\lambda|\O|.
\end{equation}
Our main result is the following.

\begin{theo}\label{t:0}
Let $f\ge0$ be a function in $L^q(D)$, for some $q$ such that $q>d/p$ and $q\ge 1$. Let $\lambda>0$, $g\in L^r(D)$ for some $r>1$, be a non-negative measurable function and $J$ be the model functional given by \eqref{e:modelJ}.
\begin{enumerate}[\rm (i)]
\item If there is a constant $C>0$ such that 
$$f(x)\le Cg(x)\quad\text{for every}\quad x\in D,$$
then, there is an open set $\O_\opt\subset D$ solution of the problem \eqref{e:minpb}.
\item If $\O_\opt$ is a solution of \eqref{e:minpb}, then it has a finite perimeter. 
\end{enumerate}
\end{theo}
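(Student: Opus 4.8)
The plan is to handle (i) and (ii) separately, the first by combining a priori bounds with a relaxation argument and free boundary regularity, the second by an Alt--Caffarelli type truncation.

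\emph{A priori bounds and reductions.} Since $f\ge0$, the weak comparison principle for $-\Delta_p$ gives $0\le u_\O\le w$ for every admissible $\O$, where $w$ is the state of $D$ itself; as $f\in L^q(D)$ with $q>d/p$, De Giorgi--Nash--Moser regularity makes $w$ bounded (indeed Hölder continuous), so the family $\{u_\O:\O\subset D\}$ is bounded in $L^\infty(D)$ and, testing \eqref{e:stateq} with $u_\O$, also in $W^{1,p}_0(D)$ since $\int_D|\nabla u_\O|^p\,dx=\int_D fu_\O\,dx\le\|f\|_{L^1}\|w\|_{L^\infty}$ (here $f\in L^1(D)$, and below $g\in L^1(D)$, because $|D|<\infty$). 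Replacing $\O$ by the quasi-open set $\{u_\O>0\}$ leaves $u_\O$ unchanged and does not increase $|\O|$, so I may assume $\O=\{u_\O>0\}$; an outer approximation of quasi-open sets by open ones (outer regularity of $p$-capacity) moreover shows that the infimum in \eqref{e:minpb} equals the infimum over quasi-open $\O$. When $p>d$ the problem is then soft: $W^{1,p}_0(D)\hookrightarrow C^0(\overline D)$ compactly, a minimising sequence converges uniformly to a continuous $u$, $\{u>0\}$ is open, $|\{u_n>0\}|$ is lower semicontinuous by Fatou, $u=u_{\{u>0\}}$, and $\{u>0\}$ is optimal.

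\emph{Existence of an optimal set.} For general $p$ I would relax \eqref{e:minpb} to the class of $p$-capacitary measures $\mu$, with state $u_\mu$ and functional $-\int_D gu_\mu\,dx+\lambda|\{u_\mu>0\}|$: this is lower semicontinuous for $\gamma_p$-convergence (strong convergence of the states, Fatou for the measure term) and the class of capacitary measures is $\gamma_p$-compact, hence the relaxed problem admits a minimiser $\mu$. Since $\mu\ge0$, the function $u_\mu$ is a nonnegative subsolution of $-\Delta_p v=f$ in $\{u_\mu>0\}$, so by comparison $u_\mu\le u_{\{u_\mu>0\}}$; therefore the quasi-open set $\{u_\mu>0\}$ does at least as well as $\mu$, and one obtains a quasi-open optimal set $\O_\opt$, which after the reduction above satisfies $\O_\opt=\{u_\opt>0\}$ with $u_\opt:=u_{\O_\opt}$.

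\emph{Openness: the role of $f\le Cg$.} It remains to show $\O_\opt$ is open, i.e. that $u_\opt$ is continuous, and this is where $f\le Cg$ is needed. On $u_\opt$ one has $\int_D|\nabla u_\opt|^p\,dx=\int_D fu_\opt\,dx\le C\int_D gu_\opt\,dx$, which lets one absorb the cost term $-\int_D gu\,dx$ into the Dirichlet energy: using that $u_{\{v>0\}}$ minimises $\int_D(\tfrac1p|\nabla\varphi|^p-f\varphi)\,dx$ among $\varphi\in W^{1,p}_0(D)$ vanishing off $\{v>0\}$, together with $f\le Cg$, one gets for every $v\ge0$
\[
-\int_D g\,u_{\{v>0\}}\,dx+\lambda\big|\{v>0\}\big|\ \le\ \frac{p'}{Cp}\int_D|\nabla v|^p\,dx-\frac{p'}{C}\int_D fv\,dx+\lambda\big|\{v>0\}\big|.
\]
The functional on the right is of Bernoulli / Alt--Caffarelli type with right-hand side $f\in L^q$, $q>d/p$, and one shows that $u_\opt$ is, locally in every ball, a quasi-minimiser of it; the regularity theory for such free boundary problems (comparison with $p$-harmonic replacements, nondegeneracy and density estimates for $\{u_\opt>0\}$, where $q>d/p$ enters) then gives $u_\opt\in C^{0,\alpha}_{\loc}(D)$, so $\O_\opt=\{u_\opt>0\}$ is open and, being the state of its own positivity set, solves \eqref{e:minpb}. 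I expect this step — making precise the quasi-minimality of $u_\opt$ and running the free boundary regularity — to be the main obstacle.

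\emph{Finite perimeter.} Let $\O=\O_\opt$ be any solution and $u=u_\O$; as above I may take $\O=\{u>0\}$. For $t>0$ the quasi-open set $\{u>t\}$ is an admissible competitor (the infimum is unchanged over quasi-open sets), and its state is exactly $(u-t)^+$ — it solves \eqref{e:stateq} in $\{u>t\}$ and vanishes elsewhere — so optimality of $\O$ gives
\[
-\int_D gu\,dx+\lambda|\O|\ \le\ -\int_D g\,(u-t)^+\,dx+\lambda\big|\{u>t\}\big|,
\qquad\text{i.e.}\qquad
\lambda\,\big|\{0<u\le t\}\big|\ \le\ \int_D g\min(u,t)\,dx\ \le\ t\,\|g\|_{L^1}.
\]
Testing \eqref{e:stateq} with the test function $\min(u,t)\in W^{1,p}_0(\O)$ gives moreover $\int_{\{0<u<t\}}|\nabla u|^p\,dx=\int_D f\min(u,t)\,dx\le t\,\|f\|_{L^1}$. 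Now set $g_t:=\min(u/t,1)$, so that $g_t\to\ind_{\{u>0\}}$ in $L^1(D)$ as $t\to0$ and $\nabla g_t=t^{-1}\nabla u\,\ind_{\{0<u<t\}}$; Hölder's inequality yields
\[
\int_D|\nabla g_t|\,dx=\frac1t\int_{\{0<u<t\}}|\nabla u|\,dx\ \le\ \frac1t\,\big|\{0<u<t\}\big|^{1/p'}\Big(\int_{\{0<u<t\}}|\nabla u|^p\,dx\Big)^{1/p}\ \le\ \Big(\frac{\|g\|_{L^1}}{\lambda}\Big)^{1/p'}\|f\|_{L^1}^{1/p},
\]
uniformly in $t>0$. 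Letting $t\to0$ and using the lower semicontinuity of the total variation finally gives $\per(\O)=\per\big(\{u>0\}\big)\le\liminf_{t\to0}\int_D|\nabla g_t|\,dx<\infty$.
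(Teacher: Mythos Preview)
Your overall strategy matches the paper's: existence via relaxation to $p$-capacitary measures and monotonicity to pass to a quasi-open optimum, openness via a free-boundary comparison exploiting $f\le Cg$, and finite perimeter via the truncation $(u-t)^+$.

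For part~(ii) your argument is correct and in fact slightly cleaner than the paper's. You observe directly that $(u-t)^+$ is the state function of the quasi-open set $\{u>t\}$, so $\{u>t\}$ is a genuine competitor in the shape problem itself; the paper instead passes through an auxiliary minimisation over $p$-subsolutions $\Delta_p v+f\ge0$ and checks that $(\bar u-\varepsilon)^+$ is admissible there. Both routes produce the same two estimates $\lambda\,|\{0<u\le t\}|\le t\|g\|_{L^1}$ and $\int_{\{0<u<t\}}|\nabla u|^p\le t\|f\|_{L^1}$, and conclude via H\"older and the coarea/BV lower-semicontinuity argument of Bucur.

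There is, however, a genuine gap in your openness step. Your displayed inequality
\[
J\big(u_{\{v>0\}}\big)\ \le\ \frac{p'}{Cp}\int_D|\nabla v|^p-\frac{p'}{C}\int_D fv+\lambda\,|\{v>0\}|\ =:\ G(v)
\]
is correct for every $v\ge0$, and combined with optimality it gives $J(u_\opt)\le G(v)$ for every $v$. But this does \emph{not} make $u_\opt$ a quasi-minimiser of $G$: one computes $G(u_\opt)-J(u_\opt)=\int_D\big(g-\tfrac{f}{C}\big)u_\opt\ge0$, and this excess is in general not controlled by the size of the perturbation, so the chain $G(u_\opt)\le J(u_\opt)\le G(v)$ fails at the first inequality. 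What the paper does instead is restrict to \emph{outward} competitors $w\ge u_\opt$. Then $\widetilde u:=u_{\{w>0\}}\ge u_\opt$ by comparison, and the hypothesis $f\le Cg$ (in the form $j_0(x,t)-j_0(x,s)\le -c\,f(x)(t-s)$ for $s<t$) converts the optimality inequality $J(u_\opt)\le J(\widetilde u)$ directly into
\[
\int_D\Big(\tfrac1p|\nabla u_\opt|^p-fu_\opt\Big)\,dx-\int_D\Big(\tfrac1p|\nabla w|^p-fw\Big)\,dx\ \le\ \tfrac{1}{c}\Big(|\{w>0\}|-|\{u_\opt>0\}|\Big),
\]
after using that $\widetilde u$ minimises the Dirichlet energy on $W^{1,p}_0(\{w>0\})$. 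Taking $w$ to be the replacement solving $-\Delta_p w=f$ in a ball $B_r$ with $w=u_\opt$ outside $B_r$ --- which automatically satisfies $w\ge u_\opt$ since $u_\opt$ is a subsolution --- then yields exactly the local estimate needed to run the Danielli--Petrosyan regularity argument and conclude $u_\opt\in C^{0,\alpha}$. The restriction to $w\ge u_\opt$ is the missing ingredient in your sketch; once you impose it, your approach coincides with the paper's.
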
	

When $f=g$ this problem reduces to a free boundary problem (see Section \ref{subsub:energy}) that is, the state function $u_\O$ on an optimal domain is also a minimizer of an integral functional which is defined on the while space $W^{1,p}_0(D)$. In particular, any function in $W^{1,p}_0(D)$ can be used to test the optimality of $u_\O$ and the properties of the optimal sets $\O$ can be studied through the known free boundary regularity techniques. On the other hand, when $f\neq g$ the problem \eqref{e:minpb} cannot be written as a variational problem in $W^{1,p}_0(D)$. In particular, this means that given an optimal shape $\O$, the corresponding state function $u_\O$ is a priori optimal only among the functions, which are state functions on other domains. This is the main difficulty in studying this functional, which was first studied in \cite{BV}, where the existence and some preliminary regularity properties of the minimizers were obtained in the case $p=2$.

\subsection{Structure of the paper and further results}
In this paper we prove several general results about the existence of optimal sets and the regularity of the state functions on solutions of the shape optimization problem \eqref{e:minpb}. The functionals we consider are of the form \eqref{e:definition-of-J} and our results apply to the model case \eqref{e:modelJ} (see Remark \ref{rem:pmagd}, Remark \ref{rem:exrel.quasiaperto}, Remark \ref{rem:exrel.aperto} and Remark \ref{rem:pfin}). In particular, Theorem \ref{t:0} is a consequence of the more general Theorem \ref{t:pmagd}, Theorem \ref{t:exrel.quasiaperto}, Theorem \ref{t:exrel.aperto} and Theorem \ref{t:pfin} below.

\subsubsection{Existence of optimal open sets in the case $p>d$}
First, in Theorem \ref{t:pmagd} we prove an existence result in the case $p>d$. Since in this case the continuity of the state functions is assured by the Sobolev embedding theorem, the existence of solutions in the class of open set is immediate and can be applied to a wide class of shape optimization problems. In particular, this theorem applies also to more general shape optimization problems of the form 
\be\label{e:minpb-measure-constraint}
\min\big\{J(u_\O)\ :\ \O\text{ open},\ \O\subset D,\ |\O|\le m\big\},
\ee
where $|\cdot|$ denotes the Lebesgue measure in $\R^d$, $m\in\big(0,|D|\big]$ is given and $J$ is a functional of the form \eqref{e:definition-of-J}.

\begin{theo}[Existence of optimal sets in the case $p>d$]\label{t:pmagd}
Let $p>d$ and suppose that the cost integrand $j$ satisfies the following condition:

For every $M>0$ there exists $a_M\in L^1(D)$ such that
\be\label{condj}
-a_M(x)\le j(x,s)\quad\text{for a.e. $x \in \R^d$ and all $|s|\le M$}.
\ee
Then, the minimization problem \eqref{e:minpb-measure-constraint} admits a solution $\O_\opt$ which is an open set.
\end{theo}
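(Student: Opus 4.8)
The plan is to use the direct method of the calculus of variations together with the key fact that for $p>d$ the Sobolev space $W^{1,p}_0(D)$ embeds compactly into $C(\overline D)$ (indeed into $C^{0,1-d/p}(\overline D)$). First I would take a minimizing sequence $\O_n$ of open sets with $|\O_n|\le m$ for the functional $\O\mapsto J(u_{\O_n})$, and consider the associated state functions $u_n:=u_{\O_n}$. Using \eqref{e:varpb} with the test function $u\equiv 0$, one gets the uniform bound $\int_D\frac1p|\nabla u_n|^p\,dx\le \int_D f u_n\,dx\le \|f\|_{W^{-1,p'}}\|u_n\|_{W^{1,p}_0}$, hence $\|u_n\|_{W^{1,p}_0(D)}$ is bounded. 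By reflexivity of $W^{1,p}_0(D)$ and the compact embedding, up to a subsequence $u_n\rightharpoonup u$ weakly in $W^{1,p}_0(D)$ and $u_n\to u$ uniformly on $\overline D$.

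The candidate optimal set is then $\O_\opt:=\{u\neq 0\}$, which is open because $u$ is continuous (here is the one place $p>d$ is essential). I must check two things: that $u$ is the state function on $\O_\opt$, i.e.\ $u=u_{\O_\opt}$, and that $\O_\opt$ is admissible, i.e.\ $|\O_\opt|\le m$. For admissibility, uniform convergence gives $\O_\opt=\{u\neq0\}\subset\liminf_n\{u_n\neq0\}\subset\liminf_n\O_n$ up to the usual care with the subset relation; more robustly, for any open $\omega\Subset\O_\opt$ one has $u_n\to u$ uniformly and $u$ bounded away from $0$ on $\omega$, so $\omega\subset\O_n$ eventually, whence $|\omega|\le m$, and taking the supremum over such $\omega$ gives $|\O_\opt|\le m$. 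To identify $u$ as $u_{\O_\opt}$: since $u=0$ on $D\sm\O_\opt$, $u$ is an admissible competitor in \eqref{e:varpb} for the domain $\O_\opt$; conversely the state function $u_{\O_\opt}$ vanishes on $D\sm\O_\opt$, and by the maximum principle (using $f\ge0$ is not assumed here, so instead argue directly) one shows $u$ minimizes \eqref{e:varpb} on $\O_\opt$ by a lower-semicontinuity argument: $\int_D\frac1p|\nabla u|^p\le\liminf\int_D\frac1p|\nabla u_n|^p$ by weak lower semicontinuity of the $L^p$ norm, while $\int_D f u_n\to\int_D f u$; comparing with the value of \eqref{e:varpb} on $\O_n$, which is a lower bound for the energy at $u_n$, and using that any competitor $v$ for $\O_\opt$ is also a competitor for $\O_n$ when $\O_\opt\subset\O_n$ eventually (again via the exhaustion by $\omega\Subset\O_\opt$), one concludes $u=u_{\O_\opt}$.

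Finally I would pass to the limit in the cost. The coercivity/lower bound hypothesis \eqref{condj} with $M:=\sup_n\|u_n\|_{C(\overline D)}<\infty$ provides $a_M\in L^1(D)$ with $j(x,u_n(x))+a_M(x)\ge0$; since $j(x,\cdot)$ is lower semicontinuous and $u_n\to u$ pointwise, Fatou's lemma applied to the nonnegative integrands $j(x,u_n)+a_M$ yields
\[
J(u)=\int_D j(x,u)\,dx\le\liminf_{n\to\infty}\int_D j(x,u_n)\,dx=\liminf_{n\to\infty}J(u_{\O_n}),
\]
so $\O_\opt$ is optimal. The main obstacle I anticipate is the bookkeeping in the identification $u=u_{\O_\opt}$ and in the measure constraint: one needs the inclusion "$\omega\Subset\O_\opt\Rightarrow\omega\subset\O_n$ eventually'' to simultaneously feed the admissibility bound and license the use of $W^{1,p}_0(\O_\opt)$ competitors as $W^{1,p}_0(\O_n)$ competitors, and some care is required because $\O_\opt\subset\O_n$ need not hold globally — only an exhaustion statement does. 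Everything else is the standard direct method.
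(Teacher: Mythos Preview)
Your proposal is correct and follows essentially the same approach as the paper's proof: take a minimizing sequence, obtain a uniform $W^{1,p}_0$ bound on the state functions, use the compact embedding $W^{1,p}_0(D)\hookrightarrow C^{0,\alpha}$ (valid since $p>d$) to get a uniform limit $u$, set $\O_\opt=\{u\neq0\}$, and pass to the limit in the cost via Fatou's lemma and \eqref{condj}. The paper's own proof is considerably more terse---it asserts without argument that $\O_\opt\in\A$ and that $u$ solves \eqref{e:stateq} on $\O_\opt$---so your exhaustion-by-$\omega\Subset\O_\opt$ argument for the measure constraint and for identifying $u=u_{\O_\opt}$ is precisely the bookkeeping the paper omits; to make that identification airtight, just close the loop by noting that $C_c^\infty(\O_\opt)$ is dense in $W^{1,p}_0(\O_\opt)$, so the inequality $F(u)\le F(v)$ obtained for compactly supported competitors extends to all competitors.
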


\begin{rema}\label{rem:pmagd}
The previous result applies for instance to functionals of the form \eqref{e:modelJ} with $g\in L^1(D)$ and $\lambda\in\R$. In this case, the function $a_M$ can be chosen as
$$a_M(x)=M|g(x)|+|\lambda|.$$
\end{rema}	

\begin{rema}
Notice that \eqref{e:minpb} corresponds to \eqref{e:minpb-measure-constraint} in which the measure constraint is set to be the trivial one: $m=|D|$. 	
\end{rema}

\subsubsection{Existence of optimal quasi-open sets in the case $p\le d$} In Theorem \ref{t:exrel.quasiaperto} we study shape optimization problems of the form \eqref{e:minpb} in the case $p\le d$. In this case, the existence of optimal sets may fail and additional assumptions on the integrand $j(x,s)$ are required to prove that an optimal domain $\O_\opt$ exists. Furthermore, the existence result cannot be obtained directly in the class of open sets; in fact, we consider the wider class of the so-called $p$-quasi-open sets (see Section \ref{spld}). Also, in this case the existence of optimal sets can be generalized to the following type of shape optimization problems with measure constraint:
\be\label{e:minpb-measure-constraint-qo}
\min\big\{J(u_\O)\ :\ \O\,\text{ $p$-quasi-open},\ \O\subset D,\ |\O|\le m\big\},
\ee
where $m\in\big(0,|D|\big]$ and $J$ is given by \eqref{e:definition-of-J}.

\begin{theo}[Existence of optimal quasi-open sets in the case $p\le d$]\label{t:exrel.quasiaperto}
Let $p\le d$ and $f\ge0$. Suppose in addition that the cost integrand $j$ satisfies the following conditions:
\begin{enumerate}[\rm(a)]
\item the function $j$ is of the form 
$$j(x,s)=j_0(x,s)+\lambda\ind_{(0,+\infty)}(s),$$
where $\lambda\ge0$ and, for Lebesgue almost-every $x\in\R^d$, the function $j_0(x,\cdot)$ is non-increasing;
\item there exists a function $a\in L^1(D)$ and a constant $c\in\R$ such that
$$a(x)-c|s|^r \le j(x,s)\,,\quad\text{where}\quad\begin{cases}\ds\ 0<r<\frac{dp}{d-p}\quad\text{when}\quad p<d\,,\smallskip\\
\ 0<r<+\infty\quad\text{when}\quad p=d\,.
\end{cases}$$
\end{enumerate}
Then, for any $m\in\big(0,|D|\big]$, there exists a solution $\O_\opt$ of \eqref{e:minpb-measure-constraint-qo}. Moreover, if $\lambda = 0$, then the measure constraint is saturated: $|\O_\opt|=m$.
\end{theo}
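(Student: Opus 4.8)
The argument is the direct method of the calculus of variations; the structural hypotheses (a) and (b) will be used only to pass to the limit in the cost. Let $(\O_n)$ be a minimizing sequence of $p$-quasi-open subsets of $D$ with $|\O_n|\le m$, and put $u_n:=u_{\O_n}$. Replacing $\O_n$ by the $p$-quasi-open set $\{u_n>0\}$ (which does not change $u_n$ and does not increase the measure) we may assume $\O_n=\{u_n>0\}$. Testing the variational characterization \eqref{e:varpb} with the competitor $0$ and using the Poincar\'e inequality on $D$ (legitimate since $|D|<\infty$ and $f\in W^{-1,p'}(D)$) gives a uniform bound for $\|\nabla u_n\|_{L^p(D)}$, hence $(u_n)$ is bounded in $W^{1,p}_0(D)$. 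Up to a subsequence, $u_n\rightharpoonup u$ weakly in $W^{1,p}_0(D)$; by Rellich--Kondrachov, $u_n\to u$ strongly in $L^r(D)$ for $1\le r<dp/(d-p)$ when $p<d$ and for every $r<\infty$ when $p=d$, in particular for the exponent $r$ of hypothesis (b), and $u_n\to u$ a.e. in $D$. Since $f\ge0$, the weak maximum principle for the $p$-Laplacian yields $u_n\ge0$, hence $u\ge0$.

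Choosing a quasi-continuous representative of $u$, the set $\O_\opt:=\{u>0\}$ is $p$-quasi-open. Because $u_n\to u$ a.e. with $u_n,u\ge0$, one has $\ind_{(0,+\infty)}(u(x))\le\liminf_n\ind_{(0,+\infty)}(u_n(x))$ for a.e. $x$, so Fatou's lemma gives $|\O_\opt|\le\liminf_n|\{u_n>0\}|=\liminf_n|\O_n|\le m$; thus $\O_\opt$ is admissible for \eqref{e:minpb-measure-constraint-qo}.

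The crucial point, and the main obstacle, is the comparison
\[
u\le u_{\O_\opt}\qquad\text{a.e. in }D.
\]
Here the compactness theory of $p$-quasi-open sets developed in Section \ref{spld} enters: along a further subsequence $\O_n$ converges (in the sense of $\gamma_p$-convergence) to a $p$-capacitary measure $\mu$, the state functions $u_n$ converge strongly so that the limit $u$ is identified with $u_\mu$, and $u$ solves $-\Delta_p u+\mu\,|u|^{p-2}u=f$ in the appropriate weak sense; as $f\ge0$ and $u,\mu\ge0$, $u$ is a weak subsolution of $-\Delta_pu=f$ in $D$, and since $u=0$ q.e.\ on $D\setminus\O_\opt$ while $u_{\O_\opt}$ solves $-\Delta_pu_{\O_\opt}=f$ in $\O_\opt$ with the same boundary behaviour, the weak comparison principle for the $p$-Laplacian (monotonicity of the operator) gives $u\le u_{\O_\opt}$. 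In particular $\{u>0\}\subseteq\{u_{\O_\opt}>0\}\subseteq\O_\opt=\{u>0\}$ up to a set of zero $p$-capacity, so $\{u_{\O_\opt}>0\}=\{u>0\}$ q.e. Using hypothesis (a) (monotonicity of $j_0(x,\cdot)$) and $u_{\O_\opt}\ge u$,
\[
J(u_{\O_\opt})=\int_D j_0(x,u_{\O_\opt})\,dx+\lambda\,|\{u_{\O_\opt}>0\}|\le\int_D j_0(x,u)\,dx+\lambda\,|\{u>0\}|=J(u).
\]
Finally $J(u)\le\liminf_nJ(u_n)$: by lower semicontinuity of $j(x,\cdot)$ and $u_n\to u$ a.e.\ we have $j(x,u(x))\le\liminf_nj(x,u_n(x))$ for a.e.\ $x$; by hypothesis (b) the functions $x\mapsto j(x,u_n(x))+c|u_n(x)|^r+a^-(x)$ are nonnegative and, since $u_n\to u$ strongly in $L^r(D)$, $\int_D(c|u_n|^r+a^-)\,dx\to\int_D(c|u|^r+a^-)\,dx$; hence a generalized Fatou lemma applies and gives $\int_Dj(x,u)\,dx\le\liminf_n\int_Dj(x,u_n)\,dx$. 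Combining the two displays, $J(u_{\O_\opt})\le J(u)\le\liminf_nJ(u_n)$, which equals the infimum in \eqref{e:minpb-measure-constraint-qo}; therefore $\O_\opt$ is a solution.

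It remains to prove saturation when $\lambda=0$. If $\O\subseteq\O'$ are $p$-quasi-open, then $u_\O\le u_{\O'}$ by the comparison principle (here $f\ge0$ is used), so the monotonicity of $j_0(x,\cdot)$ and $\lambda=0$ give $J(u_{\O'})=\int_D j_0(x,u_{\O'})\,dx\le\int_D j_0(x,u_\O)\,dx=J(u_\O)$. Hence, if the solution $\O_\opt$ constructed above had $|\O_\opt|<m$, we could choose any $p$-quasi-open $\O_\opt'$ with $\O_\opt\subseteq\O_\opt'\subseteq D$ and $|\O_\opt'|=m$ (possible since $|D|\ge m$); then $\O_\opt'$ is still optimal and saturates the measure constraint. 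The only genuinely delicate step above is the comparison $u\le u_{\O_\opt}$, which rests on the $\gamma_p$-convergence/capacitary-measure description of limits of state functions; for $p\neq2$ its difficulty is the passage to the limit in the nonlinear term $|\nabla u_n|^{p-2}\nabla u_n$, which requires proving strong $W^{1,p}_0$-convergence of a subsequence of the state functions rather than mere weak convergence.
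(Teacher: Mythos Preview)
Your argument is correct and is essentially the paper's own proof, only organized differently. The paper splits the work into two pieces: first it proves existence of an optimal \emph{capacitary measure} $\mu_\opt$ in the relaxed class $\mfA=\{\mu\in\M_p(D):|\{\mu<\infty\}|\le m\}$ by $\gamma_p$-compactness and Fatou (Theorem~\ref{exrel}), and then it projects $\mu_\opt$ onto the $p$-quasi-open set $\O=\{u_{\mu_\opt,f}>0\}$, using the monotonicity $\tilde\mu:=\infty_{D\setminus\O}\Rightarrow u_{\tilde\mu,f}\ge u_{\mu_\opt,f}$ (Proposition~\ref{monotonia}) together with hypothesis~(a) to get $J(u_{\tilde\mu,f})\le J(u_{\mu_\opt,f})$. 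You run the direct method on quasi-open sets and invoke exactly the same $\gamma_p$-compactness at the key step to identify the weak limit $u$ with $u_\mu$ for some $\mu\in\M_p(D)$; your comparison $u=u_\mu\le u_{\O_\opt}$ via the subsolution argument is precisely the content of the paper's monotonicity proposition, and your saturation step is identical. The only cosmetic difference is that the paper first passes through the relaxed problem and then projects, whereas you fold both steps into a single limiting argument; neither approach avoids the capacitary-measure machinery, and both rest on the same three ingredients: $\gamma_p$-compactness, the comparison $u_\mu\le u_{\{u_\mu>0\}}$, and the monotonicity of $j_0$.
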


\begin{rema}\label{rem:exrel.quasiaperto}
It is immediate to check that the model functional \eqref{e:modelJ} satisfies the conditions of Theorem \ref{t:exrel.quasiaperto} when 
$$\lambda\ge0\ ,\qquad g\ge 0\qquad\text{and}\qquad g\in L^\ell(D)\quad\text{for some}\quad \ell>1.$$
The positivity of $g$ assures that (a) holds, while, for what concerns the condition (b), we can take 
$$c=1\ ,\quad r=\frac{\ell}{\ell-1}\quad\text{and}\quad a(x)=-|g(x)|^{\ell}.$$
In fact, by the Young's inequality
$$j(x,s)=-g(x)s+\lambda_{(0,+\infty)}(s)\ge -g(x)s\ge -|g(x)|^\ell-|s|^{\frac{\ell}{\ell-1}}=a(x)-s^r.$$
\end{rema}

\subsubsection{Existence of optimal open sets} In Theorem \ref{t:exrel.aperto} we show that the optimal quasi-open sets provided by Theorem \ref{t:exrel.quasiaperto} are in fact open. We obtain this result under some additional assumptions on the cost functional and on the datum $f$. In particular, we require that the function $j$ has some growth conditions, that the right-hand side $f$ in the state equation \eqref{e:stateq} has a suitable summability, and also that the problem is of the form \eqref{e:minpb}, that is, we remove the measure constraint in \eqref{e:minpb-measure-constraint-qo} by setting $m=|D|$. In particular, since the class of open sets is dense in the space of quasi-open sets, Theorem \ref{t:exrel.quasiaperto} and Theorem \ref{t:exrel.aperto} together provide an existence result for the shape optimization problem \eqref{e:minpb} in the class of open sets.

\begin{rema}\label{counterex}
Without extra summability assumptions on the right-hand side $f$, the optimal domain $\O_\opt$ in Theorem \ref{t:exrel.quasiaperto} is only a $p$-quasi open set, not open in general (see for instance \cite[Example 4.3]{BS20}).
\end{rema}

\begin{theo}[Optimal sets are open]\label{t:exrel.aperto}
Let $p\le d$, $m=|D|$ and $f\in L^q(D)$, where $q>d/p$. Suppose that the cost function $j$ is of the form 
$$j(x,s)=j_0(x,s)+\lambda\ind_{(0,+\infty)}(s),$$
where $\lambda> 0$ and $j_0$ satisfies the following assumptions: 
\begin{itemize}
\item $j_0(x,0)=0$ for almost-every $x\in D$;
\item there is  a constant $c>0$ such that
\be\label{hp.optk.g}
\frac{j_0(x,t)-j_0(x,s)}{t-s} \le -c f(x)
\ee
for almost-every $x \in D$ and all $s<t$.
\end{itemize}
Then, every solution of \eqref{e:minpb-measure-constraint-qo} is an open set. In particular, \eqref{e:minpb} has a solution in the class of all open subsets of $D$.
\end{theo}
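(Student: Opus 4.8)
The idea is to reduce the openness of $\Omega_{\opt}$ to the continuity of its state function. Let $\Omega_{\opt}$ be a solution of \eqref{e:minpb-measure-constraint-qo} (one exists by Theorem~\ref{t:exrel.quasiaperto}) and $u:=u_{\Omega_{\opt}}$. Since $J$ depends on $\Omega$ only through $u_\Omega$, and $u$, being the minimizer in \eqref{e:varpb} among the functions vanishing q.e.\ on $D\setminus\Omega_{\opt}$, is then also the minimizer among the smaller class of functions vanishing q.e.\ on $D\setminus\{u>0\}$, we get $u_{\{u>0\}}=u$; hence $\{u>0\}$ is again a solution and represents the same $p$-quasi-open set as $\Omega_{\opt}$. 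It therefore suffices to show that $u$ admits a continuous representative, for then $\{u>0\}$ is open and, since $m=|D|$ makes \eqref{e:minpb-measure-constraint-qo} coincide with \eqref{e:minpb}, the last assertion follows as well.

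Next I would record the basic properties of $u$. As $f\ge 0$ (as already needed in Theorem~\ref{t:exrel.quasiaperto}) we have $u\ge 0$. The key structural fact is that $u$ is a weak \emph{subsolution} of $-\Delta_p w=f$ on all of $D$, i.e.\ $\int_D|\nabla u|^{p-2}\nabla u\cdot\nabla\varphi\,dx\le\int_D f\varphi\,dx$ for every nonnegative $\varphi\in C_c^\infty(D)$. This comes out of \eqref{e:varpb} alone: for small $t>0$ the function $(u-t\varphi)^+$ vanishes q.e.\ on $D\setminus\{u>0\}$, hence is admissible in \eqref{e:varpb}; comparing its energy with that of $u$, dividing by $t$ and letting $t\to0^+$ gives the inequality (the contribution of the layer $\{0<u\le t\varphi\}$ carries the favourable sign). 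Consequently $u\in L^\infty(D)$ by the De~Giorgi iteration — this is where $f\in L^q(D)$, $q>d/p$, enters — and $u$ has an upper semicontinuous representative, still denoted $u$, with $0\le u\le\|u\|_{L^\infty}$. Finally, testing \eqref{e:varpb} with $u\pm t\varphi$ for $\varphi$ supported, up to zero capacity, in $\Omega_{\opt}$ shows that $u$ solves $-\Delta_p u=f$ there, so $u$ is locally Hölder continuous on the interior of $\{u>0\}$ by the DiBenedetto--Tolksdorf theory; the remaining point is the behaviour of $u$ near $\partial\{u>0\}$.

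This is the heart of the proof, where I would run the Alt--Caffarelli comparison scheme, adapted to the $p$-Laplacian and to the present non-variational situation. Fix $x_0\in D$ and $B_r=B_r(x_0)\Subset D$, let $v$ solve $-\Delta_p v=f$ in $B_r$ with $v=u$ on $\partial B_r$, so that $v\ge u\ge 0$ in $B_r$ by the subsolution property, and set $\widetilde u:=v\,\ind_{B_r}+u\,\ind_{D\setminus B_r}$. One checks that $\widetilde u$ is a subsolution of $-\Delta_p w=f$ that vanishes off $\widetilde\Omega:=\{\widetilde u>0\}=\Omega_{\opt}\cup\big(\{v>0\}\cap B_r\big)$, hence $\widetilde u\le u_{\widetilde\Omega}$; since $j_0(x,\cdot)$ is non-increasing and $|\{u_{\widetilde\Omega}>0\}|\le|\widetilde\Omega|$, this yields $J(u_{\widetilde\Omega})\le J(\widetilde u)$, where $J$ is extended to $W^{1,p}_0(D)$ by \eqref{e:definition-of-J}. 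Optimality of $\Omega_{\opt}$ and cancellation of the parts outside $B_r$ (where $\widetilde u=u$) then give
$$\int_{B_r}\big(j_0(x,u)-j_0(x,v)\big)\,dx\ \le\ \lambda\,\big(|\{v>0\}\cap B_r|-|\{u>0\}\cap B_r|\big)\ \le\ \lambda\,|B_r|,$$
and \eqref{hp.optk.g}, applied with $s=u(x)<t=v(x)$, bounds the left-hand side from below by $c\int_{B_r}f\,(v-u)\,dx$, so that $c\int_{B_r}f\,(v-u)\,dx\le\lambda\,|B_r|$. Combining this estimate with Harnack's inequality for the nonnegative solution $v$ of $-\Delta_p v=f$ (which controls $v$, hence $u\le v$, in $B_{r/2}(x_0)$ through its values on $\partial B_r(x_0)$ and $\|f\|_{L^q}$), with the interior estimate above, and with a dyadic iteration, one obtains at most linear growth of $u$ away from $\{u=0\}$, hence a uniform modulus of continuity of $u$ up to $\partial\{u>0\}$. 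Together with the upper semicontinuity, this gives $u\in C(D)$ and completes the proof.

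I expect the third paragraph to be the real obstacle. As stressed in the introduction, for $f\neq g$ the state function $u$ is not a minimizer of any functional on $W^{1,p}_0(D)$, so the free-boundary estimate cannot be imported from an Alt--Caffarelli-type functional and must instead be squeezed out of the shape optimality by comparison with state functions on enlarged domains; hypothesis \eqref{hp.optk.g} is precisely what converts the optimality inequality into a bound on the $f$-weighted integral of $v-u$, while $\lambda>0$ is indispensable to penalize the measure of $\{u>0\}$, and $j_0(x,0)=0$ provides the baseline against which this penalization is compared. The remaining ingredients — the De~Giorgi $L^\infty$ bound, interior Hölder regularity for $p$-harmonic-type equations, Harnack's inequality, and the upper semicontinuity of subsolutions — are classical.
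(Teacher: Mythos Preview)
Your first two paragraphs are sound and match the paper's strategy: reduce to continuity of the state function $u=\bar u$, record that it is a bounded $p$-subsolution on $D$, and treat the free boundary via an Alt--Caffarelli type comparison with the $p$-harmonic replacement $v$ on a ball. The chain $J(u)\le J(u_{\widetilde\Omega})\le J(\widetilde u_{\text{glued}})$ is correct (note that in fact $|\{u_{\widetilde\Omega}>0\}|=|\{\widetilde u_{\text{glued}}>0\}|$, so the $\lambda$-terms match).

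The gap is in the third paragraph, and it is exactly the step the paper singles out. From $J(u)\le J(\widetilde u_{\text{glued}})$ together with \eqref{hp.optk.g} you only obtain
\[
c\int_{B_r} f\,(v-u)\,dx\ \le\ \lambda\,\big|B_r\cap\{u=0\}\big|,
\]
an $f$-weighted $L^1$ bound on $v-u$. This is too weak to run the Alt--Caffarelli/Danielli--Petrosyan scheme: where $f$ is small (nothing in the hypotheses prevents $f$ from vanishing on large sets) the estimate gives no information, and even where $f\ge\delta>0$ you only get an $L^1$ bound on $v-u$, which for the \emph{nonlinear} $p$-Laplacian cannot be upgraded to a pointwise bound at the center (unlike the linear case, $v-u$ is not $p$-superharmonic, so no mean-value argument is available). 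Your appeal to Harnack for $v$ ``through its values on $\partial B_r$'' does not close this: Harnack compares $\sup v$ with $\inf v$ on interior balls, not with boundary data, and in any case you have no control on $\inf_{B_{r/2}}v$ from the inequality above.

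What the paper does differently---and this is the missing idea---is to compare first with the \emph{state function} $\widetilde u:=u_{\widetilde\Omega}$ on the enlarged domain, not with the glued replacement. Since both $\bar u$ and $\widetilde u$ are state functions, they satisfy the energy identities $\int|\nabla\bar u|^p=\int f\bar u$ and $\int|\nabla\widetilde u|^p=\int f\widetilde u$, which convert the $f$-weighted inequality into the energy inequality
\[
\Big(\tfrac1p\!\int|\nabla\bar u|^p-\!\int f\bar u\Big)-\Big(\tfrac1p\!\int|\nabla\widetilde u|^p-\!\int f\widetilde u\Big)\ \le\ C\,\big|B_r\cap\{\bar u=0\}\big|.
\]
Only \emph{after} this conversion is the replacement $w=v$ substituted for $\widetilde u$, using that $\widetilde u$ minimizes the energy on $W^{1,p}_0(\widetilde\Omega)$ and $w\in W^{1,p}_0(\widetilde\Omega)$. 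Localizing to $B_r$ and using the uniform $p$-convexity of $|\cdot|^p$ then yields the gradient estimate $\int_{B_r}|\nabla(\bar u-w)|^p\le C\big[|B_r\cap\{\bar u=0\}|+\text{l.o.t.}\big]$, which is precisely the input to the Danielli--Petrosyan iteration \cite[Section~3]{ddap1}. This energy conversion is what makes the argument independent of lower bounds on $f$; without it your proof does not go through.
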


\begin{rema}\label{rem:exrel.aperto}
It is immediate to check that the model functional \eqref{e:modelJ} satisfies the conditions of Theorem \ref{t:exrel.quasiaperto} when
$$g\ge cf\quad\text{on}\quad D\qquad\text{and}\qquad\lambda>0.$$
\end{rema}

\subsubsection{The optimal sets have finite perimeter}
In Theorem \ref{t:pfin}, we show that, under very mild assumptions, in both the situations $p>d$ and $p\le d$, the optimal domains are sets with a finite perimeter. The method we use was introduced by Bucur in \cite{bulbk} (see also \cite{bucve}) for the optimization of the $k$-th eigenvalue of the Laplacian; as we show, it can be applied to a much larger class of problems.

\begin{theo} \label{t:pfin}
Assume that $j$ is of the form
$$j(x,s)=j_0(x,s)+\lambda\ind_{(0,+\infty)}(s),$$
with $\lambda>0$ and 
\be\label{eq.jpos}
j_0(x,0)= 0\qquad \text{for a.e. }x\in D.
\ee
Let $\Omega$ be a solution to the problem \eqref{e:minpb-measure-constraint-qo} with $m=|D|$ (that is, without the measure constraint). Suppose that we are in either one of the following scenarios:
\begin{enumerate}[label=(\roman*)]
\item $f \in W^{-1,p'}(D)$, $f \ge 0$ and there exist $a \in L^1(D)$ and $\eps_0,c > 0$ such that
\be\label{eq.pf1}
\frac{|j_0(x,s+\eps)-j_0(x,s)|}{\eps}\le a(x)+c|s|^{p^\ast}
\ee
holds for all $s\in\R$, for a.e. $x \in D$ and for all $\eps\le\eps_0$;
\item $f\in L^q(D)$ for some $q>d/p$, $f \ge 0$ and there exist $a(\cdot,s)\in L^1(D)$, increasing and continuous in $s$, and $\eps_0>0$ such that
\be\label{eq.pf2}
\frac{|j_0(x,s+\eps)-j_0(x,s)|}{\eps}\le a(x,s)
\ee
holds for all $s\in\R$, a.e. $x \in D$ and and for all $\eps\le\eps_0$.
\end{enumerate}
Then the optimization problem \eqref{e:minpb-measure-constraint-qo} has a solution $\O_\opt$ which has finite perimeter.
\end{theo}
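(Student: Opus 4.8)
The plan is to show that the optimization problem has a minimizer of finite perimeter by exhibiting one explicitly: since the cost $J$ depends on the domain only through the state function, it is enough to take the positivity set of the state function of an already-known minimizer. So let $\O_\opt$ be a solution of \eqref{e:minpb-measure-constraint-qo} (it exists by Theorem \ref{t:pmagd}, Theorem \ref{t:exrel.quasiaperto} or Theorem \ref{t:exrel.aperto}), put $u:=u_{\O_\opt}$ — which is $\ge 0$ because $f\ge 0$ — and let $\O:=\{u>0\}$, read through the $p$-quasi-continuous representative of $u$. Then $u_\O=u$, so $J(u_\O)=J(u_{\O_\opt})$ and $\O$ is again a minimizer; I will prove $\per(\O)<+\infty$. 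The whole argument compares $\O$ with its superlevel sets $\O_t:=\{u>t\}$ for small $t>0$: each $\O_t$ is $p$-quasi-open, contained in $D$, with $|\O_t|\le|\O|\le|D|$, hence admissible in \eqref{e:minpb-measure-constraint-qo} with $m=|D|$. The key structural remark is that, since $f\ge 0$ and $\O_t\subset\O$, the function $(u-t)_+$ solves \eqref{e:stateq} on $\O_t$, so by uniqueness of the state function $u_{\O_t}=(u-t)_+$.

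Step 1 is the thin-strip measure estimate: there is $C>0$ with $|\{0<u\le t\}|\le C\,t$ for all $t\le\eps_0$. From the optimality inequality $J(u)\le J\big((u-t)_+\big)$, using $j(x,s)=j_0(x,s)+\lambda\ind_{(0,+\infty)}(s)$, $j_0(x,0)=0$, and $\{(u-t)_+>0\}=\O_t$, one gets
\[
\lambda\,\big|\{0<u\le t\}\big|\ \le\ \int_{\{0<u\le t\}}\!\!\big(j_0(x,0)-j_0(x,u)\big)\,dx\ +\ \int_{\O_t}\!\!\big(j_0(x,u-t)-j_0(x,u)\big)\,dx.
\]
On $\{0<u\le t\}$ one bounds $|j_0(x,u)-j_0(x,0)|$ by \eqref{eq.pf1} (resp.\ \eqref{eq.pf2}) with increment $\eps=u\le t$, and on $\O_t$ one bounds $|j_0(x,u)-j_0(x,u-t)|$ with base point $u-t$ and increment $\eps=t$. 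In case (i) this gives the bound $t\big(2\|a\|_{L^1(D)}+c\,\|u\|_{L^{p^\ast}(D)}^{p^\ast}\big)$, finite because $u\in W^{1,p}_0(D)\hookrightarrow L^{p^\ast}(D)$. In case (ii) one first invokes the standard $L^\infty$ estimate for $-\Delta_p u=f$ with $f\in L^q$, $q>d/p$, to get $\|u\|_\infty<+\infty$, and then, using that $a(\cdot,s)$ is increasing in $s$ and integrable in $x$, obtains the bound $t\int_D\big(a(x,0)+a(x,\|u\|_\infty)\big)\,dx<+\infty$. Dividing by $\lambda>0$ proves Step 1.

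Step 2 is the thin-strip energy estimate: there is $C'>0$ with $\int_{\{0<u<t\}}|\nabla u|^p\,dx\le C'\,t$ for small $t$. Testing \eqref{e:stateq} on $\O$ with $\min(u,t)\in W^{1,p}_0(\O)$ gives the identity $\int_{\{0<u<t\}}|\nabla u|^p\,dx=\langle f,\min(u,t)\rangle$. In case (ii) this is immediate, since $\langle f,\min(u,t)\rangle=\int_D f\,\min(u,t)\,dx\le t\int_\O f\,dx\le t\,\|f\|_{L^q(D)}|\O|^{1/q'}$. In case (i), where $f$ is only in $W^{-1,p'}(D)$, this is the delicate step: writing $f=-\dive F$ with $F\in L^{p'}(D;\R^d)$ and combining $f\ge 0$ with the measure bound of Step 1 to control the mass of $|F|^{p'}$ on $\{0<u<t\}$, one still obtains $\langle f,\min(u,t)\rangle\le C'\,t$. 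Then one concludes: by the co-area formula and Hölder's inequality, for every small $\delta>0$,
\[
\int_0^\delta\per(\O_s)\,ds\ =\ \int_{\{0<u<\delta\}}|\nabla u|\,dx\ \le\ \Big(\int_{\{0<u<\delta\}}|\nabla u|^p\,dx\Big)^{1/p}\,\big|\{0<u<\delta\}\big|^{1-1/p}\ \le\ C''\,\delta,
\]
so $\liminf_{s\to0^+}\per(\O_s)\le C''$. Picking $t_n\downarrow 0$ with $\per(\O_{t_n})\le C''+1$ and noting that $\ind_{\O_{t_n}}\to\ind_\O$ in $L^1(D)$, because $\big|\O\setminus\O_{t_n}\big|=\big|\{0<u\le t_n\}\big|\to 0$ by Step 1, the lower semicontinuity of the perimeter gives $\per(\O)\le\liminf_n\per(\O_{t_n})<+\infty$. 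As $\O$ is a minimizer, this is the claim.

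The only real obstacle is Step 2 in case (i): for $f\in L^q$, $q>d/p$, the energy of the strip is controlled in one line, but for a generic nonnegative $f\in W^{-1,p'}(D)$ the identity $\int_{\{0<u<t\}}|\nabla u|^p=\langle f,\min(u,t)\rangle$ does not manifestly decay linearly in $t$, and one has to exploit both the sign of $f$ and the already-established measure estimate. Everything else — the reduction to $\{u>0\}$, the identity $u_{\O_t}=(u-t)_+$, Step 1, and the co-area / lower-semicontinuity synthesis — is routine once the growth hypotheses \eqref{eq.pf1}–\eqref{eq.pf2} are used as above.
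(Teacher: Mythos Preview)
Your argument follows essentially the same route as the paper: a measure estimate $|\{0<u\le t\}|\le Ct$ obtained by comparing $\Omega$ with $\Omega_t=\{u>t\}$, an energy estimate $\int_{\{0<u<t\}}|\nabla u|^p\le C't$ obtained by testing with $u\wedge t$, and then the coarea/lower-semicontinuity synthesis. Your framing is in fact slightly cleaner: instead of passing through the paper's auxiliary problem over the class $\{\Delta_p u+f\ge 0\}$, you observe directly that $u_{\Omega_t}=(u-t)_+$, which makes $\Omega_t$ an admissible competitor in the shape problem itself; this is exactly what the paper's sentence ``it is easy to see that $\bar v$ satisfies $\Delta_p\bar v+f\ge 0$'' is encoding. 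The order of the two estimates is reversed relative to the paper, but since they are independent this is immaterial.

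There is, however, a genuine gap in your Step~2 for case~(i). You write $f=-\dive F$ with $F\in L^{p'}$ and then claim that the measure bound of Step~1 lets you ``control the mass of $|F|^{p'}$ on $\{0<u<t\}$''. It does not: knowing only $|\{0<u<t\}|\le Ct$ gives no decay for $\int_{\{0<u<t\}}|F|^{p'}\,dx$ when $|F|^{p'}$ is merely integrable --- the absolute continuity of the integral is not quantitative. Concretely, the identity $\int_{\{0<u<t\}}|\nabla u|^p=\langle f,u\wedge t\rangle=\int_{\{0<u<t\}}F\cdot\nabla u$ together with Young's inequality yields only $\int_{\{0<u<t\}}|\nabla u|^p\le\int_{\{0<u<t\}}|F|^{p'}$, and the right-hand side need not be $O(t)$. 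To be fair, the paper's own proof is equally cavalier at exactly this point: it writes $\langle f,\bar u\wedge\eps\rangle\le\eps\int_D f\,dx$, treating $f$ as an $L^1$ function, which is not guaranteed by $f\in W^{-1,p'}(D)$, $f\ge 0$ alone (a nonnegative element of $W^{-1,p'}$ is a Radon measure but need not have finite total mass on $D$). So you have correctly located the one delicate spot; your proposed fix does not work as written, but neither does the paper's one-line estimate. Case~(ii), where $f\in L^q(D)\subset L^1(D)$, is unproblematic for both arguments.
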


\begin{rema}
Notice that the assumption \eqref{eq.jpos} is not restrictive. In fact, by adding to the cost $J$ the constant quantity $\ds-\int_D j(x,0)\,dx$, we obtain that every optimal set for the cost function $j(x,s)$ is optimal also for the function $j(x,s)-j(x,0)$.
\end{rema}

\begin{rema}\label{rem:pfin}
The model functional \eqref{e:modelJ} satisfies the conditions of Theorem \ref{t:pfin} when $g\in L^1(D).$ In fact, we can simply take $a=|g|$.
\end{rema}

\subsection{Further remarks and related problems}

\subsubsection{The case $f=g$}\label{subsub:energy}
The existence and the regularity of optimal shapes in the case $f=g$ was already studied both for $p=2$ and $p\neq 2$. In fact, by testing the state equation \eqref{e:stateq} with $u_\O$, one gets that
$$\int_D|\nabla u_\O|^p\,dx=\int_{D}f(x)u_\O\,dx.$$ 
Thus, the functional $J$ can be written as 
$$J(u_\O):=\frac{p}{p-1}\left(\frac{1}{p}\int_D|\nabla u_\O|^p\,dx-\int_D f(x)u_\O\,dx+\frac{p-1}{p}\lambda|\O|\right).$$
Now, since the solution $u_\O$ of \eqref{e:stateq} is the minimizer of 
$$u\mapsto \frac{1}{p}\int_D|\nabla u|^p\,dx-\int_{D}f(x)u\,dx,$$
in $W^{1,p}_0(D)$, the shape optimization problem \eqref{e:minpb} becomes equivalent to the free boundary problem
\be\label{e:fbproblem}
\min\Big\{\frac{1}{p}\int_D|\nabla u|^p\,dx-\int_{D}f(x)u\,dx+\frac{p-1}{p}\lambda|\{u\neq 0\}|\ :\ u\in W^{1,p}_0(D)\Big\},
\ee
in the following sense:
\begin{itemize}
\item if $\O$ solves \eqref{e:minpb}, then $u_\O$ is a solution to \eqref{e:fbproblem};
\item if $u$ solves \eqref{e:fbproblem}, then the set $\{u\neq 0\}$ is optimal for \eqref{e:minpb}.
\end{itemize}
Problems of the form \eqref{e:fbproblem} were widely studied in the literature in the case $p=2$. The existence of solutions $u\in W^{1,p}$ is immediate and provides an existence of a solution to \eqref{e:minpb} in the class of the $p$-quasi-open sets. Also in this case, the existence of an optimal open set requires the study of the regularity of the solutions to \eqref{e:fbproblem}. For the general case $p\neq  2$, the regularity of $u$ and of the free boundary $\partial\{u>0\}$ were first studied by Danielli and Petrosyan in \cite{ddap1} in the case $f=0$. In the case $f\ge 0$, the problem was discussed in \cite{BS20}. For the case $p=2$ we refer to \cite{V15} and the references therein. \medskip

\subsubsection{Supremal functionals}
A class of shape optimization problems related to the one from Theorem \ref{t:0} are the ones, in which the cost functional is given by 
$$J(u_\Omega)=-\text{\rm ess}\,\sup_{\!\!\!\!\!\!\!\!\!\!\!\!\!x\in D}j(x,u_\Omega).$$
The existence of a solution can be obtained by the same argument as in the proofs of Theorem \ref{t:pmagd}, in the case $p>d$, and of Theorem \ref{t:exrel.quasiaperto}, in the case $p\le d$.
	
\subsubsection{Mixed boundary conditions}
The full Dirichlet boundary condition in \eqref{e:stateq} can be replaced by the mixed Dirichlet-Neumann condition
\be\label{mixed}
u=0\ \text{ on }\ D\sm\O,\qquad\frac{\partial u}{\partial\nu}=0\ \text{ on }\ \partial D\cap\partial\O.
\ee
The expression \eqref{mixed} is only formally written; we provide a weak form of it which allows to consider also very irregular sets. Notice that the Neumann condition in \eqref{mixed} is imposed on the {\it fixed} part of $\partial\O$, i.e. the part which lies on $\partial D$; on the {\it free} part of $\partial\O$, i.e. the one in $D$, we always assume the Dirichlet condition.
Precisely, the state function $u_\Omega$ is the minimizer of 
$$u\mapsto \frac1p\int_{\Omega}|\nabla u|^p\,dx-\int_{\Omega}uf\,dx,$$
among all functions  $u\in W^{1,p}(D)$ such that $u=0$ $p$-quasi-everywhere on $D\setminus \Omega$. In this case the existence results Theorem \ref{t:pmagd} and Theorem \ref{t:exrel.quasiaperto} still hold for the functional $J(u_\Omega)$ in the case $m<|D|$ with $D$ a bounded connected open set with Lipschitz boundary (see \cite[Section 2.2]{BV2}). On  the contrary, if 
$$m=|D|\qquad\text{and}\qquad f\ge 0,$$ 
then, $\Omega=D$ is admissible and $u_D$ may not be well-defined. In this case
$$\inf\big\{J(u_\O)\ :\ \O\text{ open},\ \O\subset D\big\}=-\infty,$$
for any non-trivial $g\ge 0$ and any $\lambda\ge0$.

When $m<|D|$, similar problems with mixed boundary conditions have been considered in \cite{BV2}. We point out that shape optimization problems with Neumann boundary condition on the free part require a complete different approach and very little is known on them.

\subsubsection{The limit problem as $p\to\infty$}
When $p=\infty$ the state function from \eqref{e:varpb} is determined by the variational problem
$$\min\left\{-\int_D f(x)u\,dx\ :\ u:D\to\R,\ |\nabla u|\le1,\ u=0\text{ on }\overline D\sm\O\right\},$$
whose solution is the function $u_\O$ given by the distance from $\overline D\sm\O$, namely
$$u_\O(x):=\dist(x,\overline D\sm\O).$$
In this way the problems above, with $p=\infty$, are related to some optimization problems in mass transport theory, see for instance \cite{BOS01} and \cite{BS03b}.

\section{Existence of a solution in the case $p>d$}\label{spgd}

We consider here the shape minimization problem \eqref{e:minpb} with the class $\A$ of admissible sets given by
$$\A=\big\{\O\subset D\ :\ \O\text{ open and }|\O|\le m\big\},$$
where $|\cdot|$ denotes the Lebesgue measure in $\R^d$ and $0<m\le|D|$ is given.

\begin{proof}[Proof of Theorem \ref{t:pmagd}]
Let $\O_n$ be a minimizing sequence in $\A$ and let $u_n$ be the solutions of \eqref{e:stateq} corresponding to $\O_n$. Multiplying \eqref{e:stateq} by $u_n$ and integrating by parts gives
$$\int_D|\nabla u_n|^p\,dx=\langle f,u_n\rangle\le\|f\|_{W^{-1,p'}(D)}\|u_n\|_{W^{1,p}(D)}.$$
On the other hand, by Poincar\'e inequality we obtain
$$\int_D|u_n|^p\,dx\le C\int_D|\nabla u_n|^p\,dx,$$
and the constant $C$ does not depend on $n$. Hence we have
$$\|u_n\|^p_{W^{1,p}(D)}\le(1+C)\int_D|\nabla u_n|^p\,dx\le(1+C)\|f\|_{W^{-1,p'}(D)}\|u_n\|_{W^{1,p}(D)},$$
so that $u_n$ is bounded in $W^{1,p}(D)$. By the Sobolev embedding theorem $u_n$ is compact in some $C^{0,\alpha}$-space, so we may assume, up to subsequences, that $u_n$ converges uniformly to some $C^{0,\alpha}$ function $u$. Then the set $\O_{\opt}=\{u\ne0\}$ is open, we have $\O_{\opt}\in\A$, and $u$ verifies the PDE \eqref{e:stateq} corresponding to $\O_{\opt}$. Moreover, by \eqref{condj} we have
$$J(u)\le\liminf_{n\to\infty}J(u_n),$$
which gives the optimality of the set $\O_{\opt}$ and concludes the proof.
\end{proof}

\begin{rema}
A similar result, with a similar proof, holds for minimization problems with the supremal cost $J(u)=-\|j(x,u)\|_{L^\infty(D)}$.
\end{rema}

We should not expect a symmetry result for solutions, in the sense that, even if all the data are radially symmetric, the solution $\O_{\opt}$ is not, as the next example shows:\medskip

\noindent\begin{minipage}{0,55\textwidth}\begin{exam}
Let $d=2$, $D$ the unit disc in $\R^2$, $p=\infty$ and consider the supremal optimization problem for $j(x,u)=-u$ and mixed Dirichlet-Neumann boundary conditions. Then $u_\O=\dist(x,D\sm\O)$ and the problem becomes
$$\min\left\{-\dist(x,D\sm\O)\ :\ |\O|\le m\right\}.$$
The optimal solution $\O_{\opt}$ (unique up to rotations) is represented in Figure 1 by the colored region and is given by the intersection
$$\O_{\opt}=B(0,1)\cap B(\bar{x},r_m)$$\end{exam}
\end{minipage}\quad\begin{minipage}{0,43\textwidth}
\centering
{\includegraphics[scale=0.7]{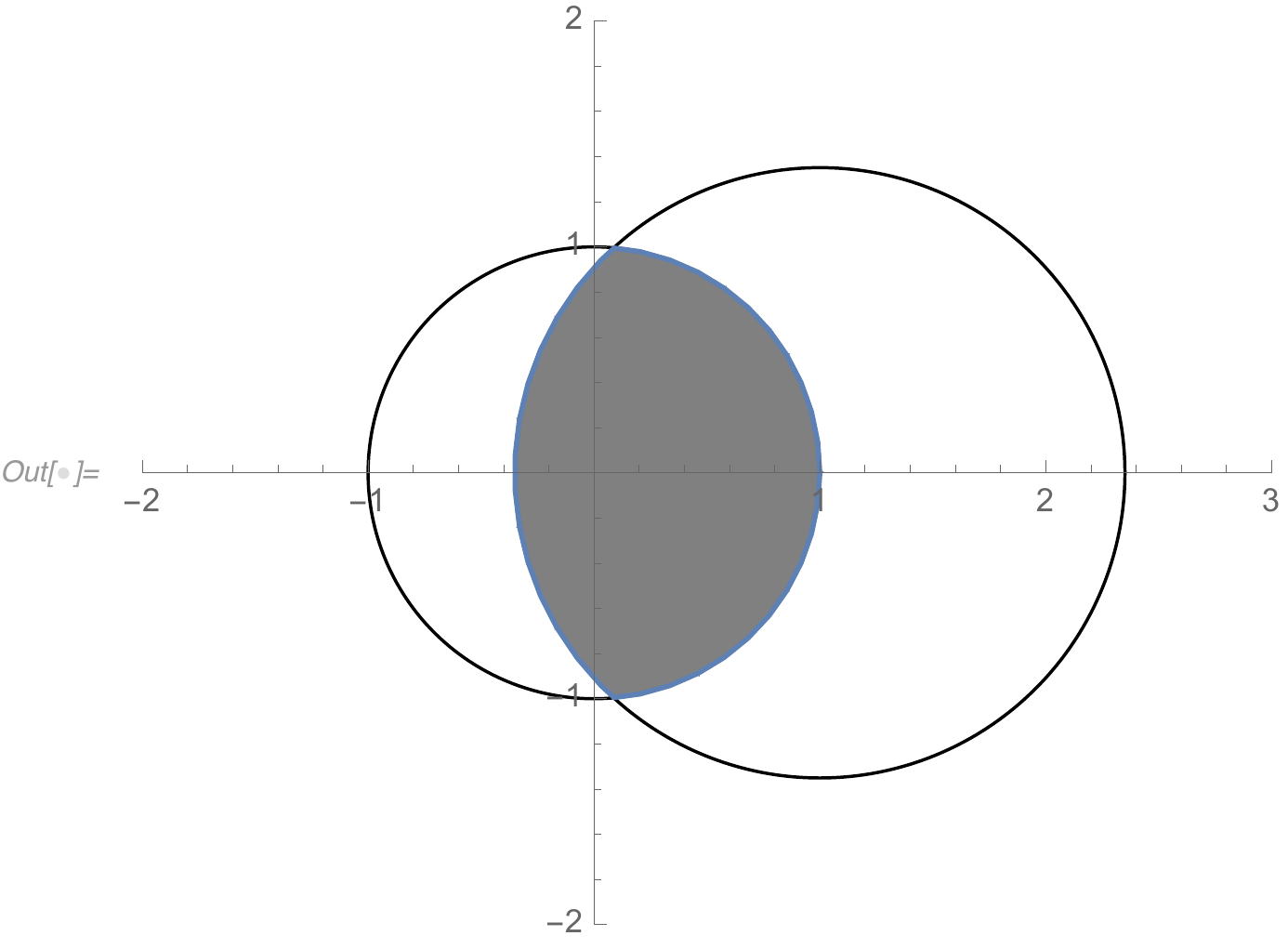}}\\
{\sc Figure 1. \rm The colored region represents an optimal $\O_{\opt}$ with $\bar{x}=(1,0)$ and $m\approx2$ ($r_m\approx1.351$).}
\end{minipage}
\medskip

\noindent
where $\bar{x}$ is any point in $\partial D$ and $r_m$ is the unique radius such that the area of the colored intersection is exactly equal to $m$.

\section{Existence in the case $p\le d$}\label{spld}

When $p\le d$ minimizing sequences $\O_n$ tend to split more and more, converging in a suitable $\gamma$-sense to a relaxed solution that is in general not a domain but only a {\it capacitary measure}. Several examples of nonexistence are known in the literature; we refer for instance to Section 4.2 of \cite{BB05}. For the sake of brevity we limit ourselves to the definitions of quasi open/closed set, of capacitary measure, and of $\gamma$-convergence; the reader interested to have a complete view on relaxed shape optimization problems with Dirichlet conditions on the free boundary and on capacitary measures may see, for instance, the book \cite{BB05} and the articles \cite{BDM91}, \cite{BDM93}.

\begin{defi}
A set $\O$ is said $p$-quasi open if $\O=\{u>0\}$ for a suitable function $u\in W^{1,p}(D)$. Similarly, a set $K$ is said $p$-quasi closed if $K=\{u=0\}$ for a suitable function $u\in W^{1,p}(D)$.
\end{defi}

\begin{defi}\label{capm}
A nonnegative regular Borel measure $\mu$ on $\R^d$ (possibly taking the value $+\infty$) is called a $p$-capacitary measure if for every Borel set $E\subset\R^d$ we have
\[\begin{cases}
\cp_p(E)=0\ \Longrightarrow\ \mu(E)=0;\\
\mu(E)=\inf\left\{\mu(\O)\: : \: \O\supset E,\ \text{$\O$ $p$-quasi open} \right\}.
\end{cases}\]
We denote by $\M_p$ the class of all $p$-capacitary measures on $\R^d$.
\end{defi}

\noindent The class $\M_p$ is very large; it includes:
\begin{enumerate}
\item[-] all measures of the form $a(x)\,dx$ with $a\in L^1_{\loc}(\R^d)$;
\item[-] all measures of the form $b(x)\,\HH^k\res S$ where $S$ is a smooth $k$-dimensional surface, $\HH^k$ is the $k$-dimensional Hausdorff measure, $k > d-p$, and $b$ is locally integrable on $S$;
\item[-] all measures of the form
$$\infty_K(E)=\begin{cases}
0&\text{if }\cp_p(E\cap K)=0\\
+\infty&\text{otherwise}
\end{cases}$$
where $K$ is a $p$-quasi closed set in $\R^d$.
\end{enumerate}
The important features of measures of the class $\M_p$ are:
\begin{enumerate}
\item[-] we can define the Sobolev space $W^{1,p}_\mu$ as the subspace of functions $u\in W^{1,p}(\R^d)$ with finite norm
\[
\|u\|_{W^{1,p}_\mu}=\left(\int_D|\nabla u|^p\,dx+\int |u|^p\,d\mu \right)^{1/p};
\]
\item[-] if $f$ is in the dual of $W^{1,p}_\mu$ (for instance if $f\in L^p(D)$), the PDE
\be\label{statrel}
-\Delta_p u+\mu|u|^{p-2}u=f
\ee
is well-defined in its weak sense as
$$\begin{cases}
u\in W^{1,p}_\mu\\
\ds\int_D|\nabla u|^{p-2}\nabla u\nabla v\,dx+\int|u|^{p-2}uv\,d\mu=\langle f,v\rangle\quad\text{for all $v\in W^{1,p}_\mu$}.
\end{cases}$$
The PDE above admits a unique solution $u_{\mu,f}$, which can be equivalently characterized as the unique minimum point of the functional
$$F(u)=\int_D\frac{1}{p}|\nabla u|^p\,dx+\int_D\frac{1}{p}|u|^p\,d\mu-\langle f,u\rangle.$$
Taking for instance $\O$ $p$-quasi open and $\mu=\infty_K$, with $K=D\sm\O$, the PDE above becomes \eqref{e:stateq} with Dirichlet conditions on $D\sm\O$, namely
$$\begin{cases}
-\Delta_p u=f\text{ on }\O\\
u=0\text{ on }K,\quad\frac{\partial u}{\partial\nu}=0\text{ on }\partial D\sm K.
\end{cases}$$
\end{enumerate}

If we denote by $u_{\mu,f}$ the unique solution of \eqref{statrel} the following monotonicity properties hold.

\begin{prop}\label{monotonia}
The map $\mu\mapsto u_{\mu,f}$ is decreasing with respect to $\mu$ whenever $f\ge0$, and increasing with respect to $f$, that is,
\[
\begin{cases}
\mu_1\ge\mu_2\Longrightarrow u_{\mu_1,f}\le u_{\mu_2,f}\qquad\text{for every }f\ge0;\\
f_1\le f_2\Longrightarrow u_{\mu,f_1}\le u_{\mu,f_2}\qquad\text{for every }\mu\in\M_p.
\end{cases}
\]
\end{prop}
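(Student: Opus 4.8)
The plan is to prove both statements by the same weak comparison argument: one tests the two weak formulations of \eqref{statrel} against the positive part of the difference of the corresponding solutions and exploits the monotonicity of the nonlinearity $\xi\mapsto|\xi|^{p-2}\xi$ and of the zero-order term. As a preliminary step I would record that $u_{\mu,f}\ge0$ whenever $f\ge0$: if $u\in W^{1,p}_\mu$ then $|u|\in W^{1,p}_\mu$ with $|\nabla|u||=|\nabla u|$ a.e.\ and the same $W^{1,p}_\mu$-norm, while $\langle f,|u|\rangle\ge\langle f,u\rangle$ since $f\ge0$; hence $F(|u|)\le F(u)$, and by uniqueness of the minimizer $u_{\mu,f}=|u_{\mu,f}|\ge0$.

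For the monotonicity in $\mu$, let $\mu_1\ge\mu_2$ and $f\ge0$, and put $u_i=u_{\mu_i,f}\ge0$. Since $\mu_1\ge\mu_2$ we have $W^{1,p}_{\mu_1}\subset W^{1,p}_{\mu_2}$, and the function $v:=(u_1-u_2)^+$ is admissible in $W^{1,p}_{\mu_1}$: on $\{u_1>u_2\}$ one has $0\le v\le u_1$ (because $u_2\ge0$), so $\int|v|^p\,d\mu_1\le\int|u_1|^p\,d\mu_1<\infty$ and $v\in W^{1,p}(\R^d)$. Testing the equation for $u_1$ and the equation for $u_2$ against $v$ and subtracting yields
\[
\int_D\big(|\nabla u_1|^{p-2}\nabla u_1-|\nabla u_2|^{p-2}\nabla u_2\big)\cdot\nabla v\,dx+\int u_1^{p-1}v\,d\mu_1-\int u_2^{p-1}v\,d\mu_2=0.
\]
On $\{u_1>u_2\}$ we have $\nabla v=\nabla(u_1-u_2)$, so the first integral is $\ge0$ by monotonicity of $\xi\mapsto|\xi|^{p-2}\xi$; moreover $0\le u_2<u_1$ there gives $u_1^{p-1}v\ge u_2^{p-1}v\ge0$, whence $\int u_1^{p-1}v\,d\mu_1\ge\int u_2^{p-1}v\,d\mu_1\ge\int u_2^{p-1}v\,d\mu_2$ using $\mu_1\ge\mu_2$ on the nonnegative integrand. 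Thus both contributions vanish; in particular $\big(|\nabla u_1|^{p-2}\nabla u_1-|\nabla u_2|^{p-2}\nabla u_2\big)\cdot\nabla(u_1-u_2)=0$ a.e.\ on $\{u_1>u_2\}$, which by the strict monotonicity of $\xi\mapsto|\xi|^{p-2}\xi$ forces $\nabla v=0$ a.e.\ on $\R^d$; since $v\in W^{1,p}(\R^d)$, $v$ is constant, hence $v\equiv0$, i.e.\ $u_1\le u_2$.

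The monotonicity in $f$ is obtained in exactly the same way, now with no sign restriction on the solutions. For $f_1\le f_2$ and fixed $\mu$, set $w_i=u_{\mu,f_i}$ and $v=(w_1-w_2)^+$; since $0\le v\le|w_1|+|w_2|$ and $w_1,w_2\in W^{1,p}_\mu$, we get $v\in W^{1,p}_\mu$. Subtracting the two weak equations tested against $v$ gives
\[
\int_D\big(|\nabla w_1|^{p-2}\nabla w_1-|\nabla w_2|^{p-2}\nabla w_2\big)\cdot\nabla v\,dx+\int\big(|w_1|^{p-2}w_1-|w_2|^{p-2}w_2\big)v\,d\mu=\langle f_1-f_2,v\rangle\le0,
\]
the right-hand side being $\le0$ since $v\ge0$ and $f_1\le f_2$. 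On $\{w_1>w_2\}$ the first integrand is $\ge0$ by monotonicity of $\xi\mapsto|\xi|^{p-2}\xi$, and the second is $\ge0$ because $t\mapsto|t|^{p-2}t$ is increasing; hence both terms vanish, $\nabla v=0$ a.e., and $v\equiv0$, that is $w_1\le w_2$.

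The only genuinely delicate points are the admissibility of $(u_1-u_2)^+$ in the first statement — which is precisely where both hypotheses $f\ge0$ (so that $u_i\ge0$ and $(u_1-u_2)^+\le u_1$ on $\{u_1>u_2\}$) and $\mu_1\ge\mu_2$ (so that $W^{1,p}_{\mu_1}\subset W^{1,p}_{\mu_2}$ and the two zero-order integrals can be compared) are used — and the passage from $\nabla v=0$ on $\{u_1>u_2\}$ to $v\equiv0$, which relies on the strict monotonicity of the $p$-Laplacian nonlinearity together with $v\in W^{1,p}(\R^d)$.
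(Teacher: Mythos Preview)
Your argument is correct, but it is not the one the paper gives. The paper works purely at the variational level: writing $F_i$ for the energy associated with $\mu_i$ (respectively $f_i$), it uses the lattice identity $F_i(u\wedge v)+F_i(u\vee v)=F_i(u)+F_i(v)$ to reduce the claim $F_1(u_1\wedge u_2)\le F_1(u_1)$ to the inequality $F_1(u_2)\le F_1(u_1\vee u_2)$, which in turn follows from the minimality of $u_2$ for $F_2$ together with the sign of $\mu_1-\mu_2$ (respectively of $f_2-f_1$); uniqueness of the minimizer then forces $u_1=u_1\wedge u_2$. Your route is instead the standard PDE comparison-principle computation: test the two Euler--Lagrange equations against $(u_1-u_2)^+$, subtract, and use the (strict) monotonicity of $\xi\mapsto|\xi|^{p-2}\xi$ together with the ordering of the zero-order terms. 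Your approach is arguably more direct and makes the role of the hypotheses $f\ge0$ and $\mu_1\ge\mu_2$ very transparent through the admissibility of $(u_1-u_2)^+$; the paper's approach has the small advantage of never invoking strict monotonicity of the operator, relying only on strict convexity of the energy (hence uniqueness), and of treating the truncations $u_1\wedge u_2$, $u_1\vee u_2$ symmetrically.
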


\begin{proof}
Let $f \ge 0$ and set $u_i:=u_{\mu_i,f}$ for $i=1,2$. To prove the monotonicity with respect to $\mu$, that is $u_1\le u_2$ when $\mu_1\ge\mu_2$, we use the variational characterization of the solutions $u_i$ as minima of the corresponding functionals
\[
F_i(u)=\int_D\frac{1}{p} |\nabla u|^p\,dx + \int_D \frac{1}{p}|u|^p\,d\mu_i - \langle f,u\rangle,
\]
and we show, for example, that $F_1(u_1\wedge u_2)\le F_1(u_1)$ so by minimality and uniqueness $u_1 = u_1 \wedge u_2$, concluding the proof. Now notice that
\[
F_i(u\wedge v)+F_i(u\vee v)=F_i(u)+F_i(v)\quad \text{for $i=1,2$},
\]
so it is enough to show that
\[
F_1(u_2)\le F_1(u_1\vee u_2).
\]
By the equalities
\[
\begin{cases}
\ds F_1(u_2)=F_2(u_2)+\int_D \frac{1}{p} |u_2|^p\,d(\mu_1-\mu_2)\\
\ds F_1(u_1\vee u_2)=F_2(u_1\vee u_2)+\int_D\frac{1}{p} |u_1\vee u_2|^p\,d(\mu_1-\mu_2)
\end{cases}
\]
it is equivalent to show that
\be\label{eqz1}
F_2(u_2)\le F_2(u_1\vee u_2) + \int_D \frac{1}{p} \left(|u_1\vee u_2|^p-|u_2|^p\right) \, d(\mu_1-\mu_2). 
\ee
By the maximum principle, since $f\ge0$, we have $u_i\ge0$ for $i=1,2$, which gives \eqref{eqz1} using the minimality of $u_2$ for $F_2$ and the fact that $|u_1\vee u_2|\ge|u_2|$.

In a similar way we can prove the monotonicity with respect to $f$. Let $\mu \in \M_p$, set $u_i:=u_{\mu,f_i}$ for $i=1,2$ and consider the corresponding functionals given by 
\[
F_i(u)=\int_D\frac{1}{p}|\nabla u|^p\,dx+\int_D\frac{1}{p}|u|^p\,d\mu-\langle f_i,u\rangle.
\]
To prove the monotonicity property it is enough to show that
\[
F_1(u_1\wedge u_2)\le F_1(u_1),
\]
which, operating as above, amounts to be equivalent to
\be\label{eqz2}
F_1(u_2)\le F_1(u_1\vee u_2).
\ee
By the equalities
\[
\begin{cases}
F_1(u_2)=F_2(u_2)+\langle f_2-f_1,u_2\rangle\\
F_1(u_1\vee u_2)=F_2(u_1\vee u_2)+\langle f_2-f_1,u_1\vee u_2\rangle
\end{cases}
\]
the inequality \eqref{eqz2} can be rewritten as
\[
F_2(u_2)\le F_2(u_i\vee u_2)+\langle f_2-f_1,u_1\vee u_2-u_2\rangle,
\]
which follows by the minimality of $u_2$ for $F_2$ and the fact that $(u_1\vee u_2)-u_2\ge0$.
\end{proof}

We consider the subclass $\M_p(D)$ of $p$-capacitary measures supported on $\overline{D}$. This class can be endowed with a very natural convergence, the $\gamma_p$-convergence.

\begin{defi}\label{gammap}
A sequence $\mu_n\in\M_p(D)$ is said to $\gamma_p$-converge to $\mu\in\M_p(D)$ if
$$\|u_{\mu_n,1}-u_{\mu,1}\|_{L^p(D)}\to0.$$
\end{defi}

\begin{rema}
It is possible to show that
\[
\|u_{\mu_n,1}-u_{\mu,1}\|_{L^p(D)}\to0\ \implies\ \|u_{\mu_n,f}-u_{\mu,f}\|_{L^p(D)}\to0 \quad \text{for every $f\in W^{-1,p'}(D)$}.
\]
In addition, the distance defined by setting
\[
d_{\gamma_p}(\mu,\nu):=\|u_{\mu,1}-u_{\nu,1}\|_{L^p(D)}
\]
is equivalent to the $\gamma_p$-convergence and makes $\M_p(D)$ a compact metric space.
\end{rema}

An important result is the $\gamma_p$ density in $\M_p(D)$ of some subclasses:
\begin{enumerate}
\item[-] the class of measures $a(x)\,dx$ with $a$ smooth function;
\item[-] the class of measures $\infty_K$ with $K$ smooth closed set.
\end{enumerate}
In addition, if $\mu_n$ $\gamma_p$-converges to $\mu$, the Dirichlet regions $\{\mu_n=\infty\}$ fulfill the following semicontinuity property (see Proposition 5.3.6 of \cite{BB05}):
\be\label{scimeas}
\limsup_{n\to + \infty} |\{\mu_n=\infty\}|\le|\{\mu=\infty\}|.
\ee

\subsection{Existence of an optimal capacitary measure}

The relaxed form of the minimization problem \eqref{e:minpb} in the class $\M_p(D)$ reads
\be\label{minrel}
\min\left\{J(u_{\mu,f})\: :\:u_{\mu,f}\text{ solves \eqref{statrel}, }\mu\in\mfA\right\},
\ee
where $\mfA$ is the admissible class
\[
\mfA=\big\{\mu\in\M_p(D),\ |\{\mu<\infty\}|\le m\big\}.
\]
The framework above allows to obtain a rather general existence result of minimizers for the relaxed problem above. A similar argument was used in \cite{BMV} in the context of optimal potentials for Schr\"odinger operators. 

\begin{theo}\label{exrel}
Let $p\le d$ and let $f\in W^{-1,p'}(D)$. Assume that there exists $a \in L^1(D)$ such that the cost integrand $j$ satisfies the assumption
\[
a(x)-c|s|^q\le j(x,s)\qquad\text{with $q < \frac{dp}{d-p}$\quad\text{(any $q<+\infty$ if }p=d)}.
\]
Then the relaxed minimization problem \eqref{minrel} admits a solution $\mu_\opt\in\M_p$.
\end{theo}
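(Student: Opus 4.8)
The plan is to use the direct method of the calculus of variations, exploiting the compactness of $\M_p(D)$ under $\gamma_p$-convergence together with a lower semicontinuity argument for the cost $J$. First I would take a minimizing sequence $\mu_n\in\mfA$ for the relaxed problem \eqref{minrel}, and denote by $u_n:=u_{\mu_n,f}$ the corresponding state functions. Since $\M_p(D)$ is a compact metric space for the distance $d_{\gamma_p}$, up to a subsequence $\mu_n$ $\gamma_p$-converges to some $\mu_\opt\in\M_p(D)$; by the remark following Definition \ref{gammap}, this gives $u_n\to u_{\mu_\opt,f}=:u$ strongly in $L^p(D)$, and in particular $u_n\to u$ a.e. in $D$ along a further subsequence. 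I would also check that $\mu_\opt\in\mfA$, i.e. that the measure constraint $|\{\mu_\opt<\infty\}|\le m$ is preserved: writing $\{\mu<\infty\}=D\sm\{\mu=\infty\}$ and using the semicontinuity \eqref{scimeas}, namely $\limsup_n|\{\mu_n=\infty\}|\le|\{\mu_\opt=\infty\}|$, we get $|\{\mu_\opt<\infty\}|\le\liminf_n|\{\mu_n<\infty\}|\le m$, so $\mu_\opt$ is admissible.

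Next I would establish $J(u)\le\liminf_n J(u_n)$, which combined with minimality of the sequence gives optimality of $\mu_\opt$. The integrand $j(x,\cdot)$ is lower semicontinuous by hypothesis, so $j(x,u(x))\le\liminf_n j(x,u_n(x))$ for a.e. $x$. To pass the inequality through the integral via Fatou's lemma, I need a uniform lower bound of the form $j(x,u_n(x))\ge a(x)-c|u_n(x)|^q$ with an $n$-independent integrable minorant; the assumed bound gives the first inequality, so it remains to control $\int_D|u_n|^q\,dx$ uniformly in $n$. This is where the growth restriction $q<dp/(d-p)$ (or $q<\infty$ when $p=d$) enters: I would show the state functions $u_n$ are bounded in $W^{1,p}(\R^d)$ — testing \eqref{statrel} with $u_n$ itself gives $\int_D|\nabla u_n|^p\,dx+\int|u_n|^p\,d\mu_n=\langle f,u_n\rangle\le\|f\|_{W^{-1,p'}}\|u_n\|_{W^{1,p}}$, and since all $u_n$ are supported in $\overline D$ which has finite measure, the Poincar\'e inequality closes the estimate to yield a uniform $W^{1,p}$ bound. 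Then the Rellich–Kondrachov theorem gives (up to a subsequence) strong convergence $u_n\to u$ in $L^q(D)$ for every $q<p^\ast=dp/(d-p)$ when $p<d$ (any $q<\infty$ when $p=d$), so in particular $|u_n|^q$ is bounded in $L^1(D)$ and, being convergent in $L^1$, even uniformly integrable; this provides the equi-integrable minorant $a(x)-c|u_n(x)|^q$ needed to apply the (generalized) Fatou lemma to $g_n(x):=j(x,u_n(x))-(a(x)-c|u_n(x)|^q)\ge0$.

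Putting these together: $\liminf_n J(u_n)=\liminf_n\int_D j(x,u_n)\,dx\ge\int_D\liminf_n j(x,u_n)\,dx\ge\int_D j(x,u)\,dx=J(u)=J(u_{\mu_\opt,f})$, where the first inequality combines Fatou applied to $g_n$ with the $L^1$-convergence of the subtracted minorant. Hence $\mu_\opt$ realizes the infimum and solves \eqref{minrel}.

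The main obstacle is the lower semicontinuity step — more precisely, ensuring that the a.e. lower semicontinuity of $j(x,\cdot)$ can be promoted to $\liminf$-semicontinuity of the integral functional $u\mapsto\int_D j(x,u)\,dx$ along the sequence $u_n$. The subtlety is that $j$ is not assumed convex in $s$ (indeed the model integrand has a jump term $\lambda\ind_{(0,+\infty)}$), so one cannot invoke weak lower semicontinuity in $W^{1,p}$ directly; the argument genuinely relies on upgrading weak $W^{1,p}$ convergence to strong $L^q$ convergence via compactness, which is exactly what the strict subcriticality $q<p^\ast$ buys, and then on the uniform integrability of $|u_n|^q$ to justify the Fatou truncation. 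Care is also needed to verify that $u=u_{\mu_\opt,f}$ — i.e. that the $L^p$-limit of the state functions is the state function of the limit measure — but this is precisely the content of the remark after Definition \ref{gammap} and so may be quoted.
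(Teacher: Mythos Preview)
Your proposal is correct and follows essentially the same route as the paper's proof: compactness of $\M_p(D)$ under $\gamma_p$-convergence to extract a limit, the semicontinuity \eqref{scimeas} to verify admissibility, strong $L^q$ convergence of the state functions via the $W^{1,p}$ bound and Rellich--Kondrachov, and Fatou's lemma for the cost. The paper's argument is terser---it simply asserts weak $W^{1,p}$ convergence and strong $L^q$ convergence without deriving the energy bound---whereas you spell out the $W^{1,p}$ estimate and the equi-integrable minorant needed for the Fatou step; these details are correct and fill in what the paper leaves implicit.
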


\begin{proof}
If $\mu_n\in\A$ is a minimizing sequence, by the $\gamma_p$-compactness of $\M_p(D)$ we may assume, up to a subsequence, that $\mu_n$ $\gamma_p$-converges to some $\mu\in\M_p(D)$. By the lower semicontinuity property \eqref{scimeas} we have that
$$\limsup_{n\to+\infty}|\{\mu_n=\infty\}|\le|\{\mu=\infty\}|\implies|\{\mu<\infty\}|\le m,$$
which means that $\mu$ belongs to the admissible class $\mfA$. Then the corresponding solutions $u_{\mu_n,f}$ tend to $u_{\mu,f}$ weakly in $W^{1,p}(D)$, hence strongly in $L^q(D)$. By Fatou's lemma
\[
J(u_{\mu,f})\le\liminf_{n \to + \infty}J(u_{\mu_n,f}),
\]
and this concludes the proof.
\end{proof}

\subsection{Existence of $p$-quasi-open optimal sets}

In some cases the minimization problem \eqref{minrel} becomes trivial. For instance, if the cost integrand satisfies
\[
j(x,s)\ge j(x,0)\quad \text{for all $(x,s) \in \R^d\times \R$},
\]
then the measure $\mu=\infty_{\overline D}$, which corresponds to $\O_\opt$ being the empty set, gives $u_{\mu,f}=0$ and solves the problem. On the other hand, in some particular situations we can prove the existence of an optimal measure of the form $\mu=\infty_{D\setminus\O}$, as the following result shows.

\begin{proof}[Proof of Theorem \ref{t:exrel.quasiaperto}]
Let $\mu\in\M_p(D)$ be a relaxed optimal solution and let $u_{\mu,f}$ be the corresponding optimal state, solution of \eqref{statrel}. Since $f\ge0$, by the maximum principle we have $u_{\mu,f}\ge0$ and $\O =\{ u_{\mu,f} > 0 \}$ is a $p$-quasi open set. By \eqref{scimeas} we have 
\[
|\O|\le m\implies\tilde{\mu} = \infty_{D\sm\O} \in \mfA.
\]
We have $\tilde\mu\le\mu$ which gives $u_{\tilde\mu,f}\ge u_{\mu,f}$ by Proposition \ref{monotonia}. By the monotonicity assumption (a) on the integrand $j$ we obtain
\[
J(u_{\tilde\mu,f})\le J(u_{\mu,f})
\]
which shows that the measure $\infty_{D\sm\O}$ is optimal. Finally, if $|\O|<m$, taking $\hat{\O}\supset\O$ with $|\hat{\O}|=m$ would give $\infty_{D\sm\hat{\O}}\le\infty_{D\sm\O}$, and so $u_{\infty_{D\sm\hat{\O}},f}\ge u_{\infty_{D\sm\O},f}$, hence the optimality of $\hat{\O} =: \O_\opt$, using again the monotonicity property (a).
\end{proof}

\begin{rema}
The assumption that the integrand $j(x,\cdot)$ is nonincreasing cannot be removed; in Section 4.2 of \cite{BB05} the case $f=1$ and $j(x,s)=|s-c|^2$ is considered and it is shown that the optimal measures in $\M_p(D)$, when $c$ is small enough, are not of the form $\mu=\infty_{D\sm\O}$.
\end{rema}

\begin{lemm}\label{lemma.bdd1}
Let $\mu$ be a positive $p$-capacitary measure in $D$. Suppose that  $f\in L^q(D)$ for some $q>d/p$, then the solution $u_{\mu,f}$ is $L^\infty(D)$.
\end{lemm}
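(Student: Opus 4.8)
The plan is to adapt the classical Stampacchia truncation method to the nonlinear weighted setting of equation \eqref{statrel}. First I would fix notation: write $u = u_{\mu,f}$, and for $k \ge 0$ consider the truncated function $u_k := (u-k)^+$, which belongs to $W^{1,p}_\mu$ since $u$ does and since $\mu$ is a $p$-capacitary measure (truncations preserve membership in $W^{1,p}_\mu$; note $|u_k|^p \le |u|^p$ pointwise). Since $f \ge 0$ forces $u \ge 0$ by the maximum principle, it suffices to bound $\ess\sup u$ from above. I would test the weak formulation of \eqref{statrel} with $v = u_k$, obtaining
\[
\int_D |\nabla u|^{p-2}\nabla u \cdot \nabla u_k\,dx + \int_D |u|^{p-2} u\, u_k\,d\mu = \int_D f\, u_k\,dx.
\]
On $\{u > k\}$ we have $\nabla u_k = \nabla u$, so the first term equals $\int_D |\nabla u_k|^p\,dx$; the second term is nonnegative (both factors are $\ge 0$ on the support of $u_k$), so it can be discarded. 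This yields the fundamental inequality
\[
\int_D |\nabla u_k|^p\,dx \le \int_{A_k} f\, u_k\,dx,
\]
where $A_k := \{u > k\}$.

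Next I would estimate the right-hand side. Since $q > d/p$, in particular $p < d$ forces $p^\ast = \frac{dp}{d-p} > q'$ (and when $p \ge d$ the Sobolev embedding is even better); so by Hölder with exponents $q$ and $q'$ and then the Sobolev–Gagliardo–Nirenberg inequality applied to $u_k \in W^{1,p}_0(D)$, together with a further Hölder interpolation on $A_k$ between $L^{q'}(A_k)$ and $L^{p^\ast}(A_k)$, one gets
\[
\int_{A_k} f\, u_k\,dx \le \|f\|_{L^q(D)}\, \|u_k\|_{L^{q'}(A_k)} \le C\,\|f\|_{L^q(D)}\, \|\nabla u_k\|_{L^p(D)}\, |A_k|^{\beta}
\]
for some exponent $\beta > 0$ depending only on $d$, $p$, $q$ (this is where $q > d/p$ is used, to make $\beta$ strictly positive). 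Combining with the fundamental inequality and dividing by $\|\nabla u_k\|_{L^p(D)}^{p-1}$ gives $\|\nabla u_k\|_{L^p(D)} \le \big(C\|f\|_{L^q}\big)^{1/(p-1)} |A_k|^{\beta/(p-1)}$, and then by Sobolev again $\|u_k\|_{L^{p^\ast}(D)} \le C' |A_k|^{\beta/(p-1)}$.

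Finally I would run the standard Stampacchia iteration: for $h > k \ge 0$ one has $A_h \subset A_k$ and $(h-k)\,|A_h|^{1/p^\ast} \le \|u_k\|_{L^{p^\ast}(D)}$, hence
\[
|A_h| \le \frac{C'' \, |A_k|^{\gamma}}{(h-k)^{p^\ast}}, \qquad \gamma := \frac{\beta p^\ast}{p-1}.
\]
The condition $q > d/p$ is exactly what ensures $\gamma > 1$, so the classical lemma of Stampacchia (e.g. Lemma B.1 in the standard references, or one proves it by hand via a geometric recursion) applies: there exists $k_0$, explicitly bounded in terms of $\|f\|_{L^q(D)}$, $|D|$, $d$, $p$, $q$, with $|A_{k_0}| = 0$, i.e. $u \le k_0$ a.e. Thus $u \in L^\infty(D)$.

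The main obstacle I anticipate is purely technical rather than conceptual: verifying that the chain of interpolation inequalities produces an exponent $\gamma$ strictly larger than $1$ precisely under the hypothesis $q > d/p$, and double-checking that the Sobolev embedding is used in the correct (sub)critical form in the borderline case $p = d$ (where one replaces $p^\ast$ by any large finite exponent). One should also take a moment to justify rigorously that $u_k$ is an admissible test function in $W^{1,p}_\mu$ and that the $\mu$-term has the right sign — both follow from $u \ge 0$ and the lattice properties of $W^{1,p}_\mu$, but deserve an explicit sentence.
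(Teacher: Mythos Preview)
Your argument is correct and self-contained, but the paper takes a shorter route: rather than running the Stampacchia iteration directly on the full equation \eqref{statrel}, it invokes the monotonicity established in Proposition~\ref{monotonia} to compare $u_{\mu,f}$ with $u_{0,f}$ (after first reducing to $f\ge 0$, just as you do). Since $\mu\ge 0$ trivially, one obtains $0\le u_{\mu,f}\le u_{0,f}$, and $u_{0,f}$ is the weak solution of $-\Delta_p u = f$ in $D$ with zero Dirichlet data, whose $L^\infty$ bound under $f\in L^q$ with $q>d/p$ is the classical De~Giorgi--Moser--Stampacchia estimate (the paper simply cites \cite{GT01}). Your approach instead discards the $\mu$-term by its favorable sign inside the iteration itself, which amounts to re-proving that classical bound while observing that the extra zero-order term is harmless; this buys you independence from Proposition~\ref{monotonia} and an explicit quantitative estimate, at the cost of writing out the recursion and checking the borderline exponents. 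The paper's route buys brevity and makes transparent the structural point that the measure $\mu$ can only improve, never spoil, boundedness.
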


\begin{proof}
We can assume that $f\ge 0$. By Proposition \ref{monotonia}, we have that
\[
\mu\ge0\implies u_{\mu,f}\le u_{0,f},
\]
By definition, the function $u_{0,f}$ is the unique solution (in the weak sense) of the equation
\[
-\Delta_p u_{0,f}=f\text{ in }D.
\]
By modifying the proof of \cite[Theorem 8.17]{GT01} for any $p>1$, we find that
\[
\text{$f\in L^q(D)$ with }q>d/p\implies u_{0,f} \in L^\infty(D),
\]
and this is enough to infer that $u_{\mu,f} \in L^\infty(D)$.
\end{proof}

If we assume additional summability on the right-hand side $f$, then we can prove that the optimal set $\O_\opt$ is open. As mentioned in Remark \ref{counterex} these assumptions cannot be removed.

\subsection{Existence of open optimal sets}
Let $\Omega_\opt$ be a solution to \eqref{e:minpb-measure-constraint-qo} with $m=|D|$, as in Theorem \ref{t:exrel.aperto}.
To prove that $\O_\opt$ is open we will show that the corresponding solution $\bar{u}$ to the auxiliary problem
$$\min\Big\{\int_D \left( j(x,u) + \lambda\mathds{1}_{\{u>0\}}(x) \right) \,dx\ :\ \Delta_p u+f\ge0,\ u\in W_0^{1,p}(D)\Big\}$$
belongs to $C^{0,\alpha}(D)$ for some $\alpha < 1$; for this, we follow the proof of \cite[Section 3]{ddap1}.

\begin{proof}[Proof of Theorem \ref{t:exrel.aperto}]
Since we are dealing with the problem \eqref{e:minpb-measure-constraint-qo} the cost function is of the form
$$j(x,u)+\lambda\mathds{1}_{\{u>0\}},$$
where the constant $\lambda>0$ can be interpreted as a Lagrange multiplier. For the sake of simplicity we assume that $\lambda=1$.

Let $w$ be a function in $W^{1,p}_0(D)$ such that $w\ge \bar u$, where $\bar u$ is the solution of 
$$-\Delta_p\bar u=f\quad\text{in}\quad \O_\opt,\qquad\bar u\in W^{1,p}_0(\O_\opt)$$ 
on the optimal set $\O_\opt$. We define the quasi-open set 
$$\widetilde\O= \{w > 0\}.$$
Notice that, by construction, $\widetilde{\O}\supset\O_\opt$. Let $\widetilde u$ be the solution of 
$$-\Delta_p\widetilde u=f\quad\text{in}\quad \widetilde\O,\qquad \widetilde u\in W^{1,p}_0(\widetilde \O).$$ 
Thus, by the maximum principle $\widetilde u\ge \bar u$.
Since $\O_\opt$ is optimal, we have
$$ \int_{\O_\opt} j(x,\bar{u})\,dx + |\{ \bar{u}>0\} | \le \int_{\widetilde{\O}} j(x,\widetilde{u})\,dx + |\{\widetilde{u}>0\} |.$$
Since $\bar{u}$ is zero on $\widetilde{\O}\sm\O_\opt$ and $j(x,0) = 0$, we can write both integrals on the larger set $\widetilde{\O}$ obtaining
$$\int_{\widetilde{\O}} \big( j(x,\bar{u}) - j(x,\widetilde{u})\big) dx \le |\{ \widetilde{u}>0\} | - |\{\bar{u}>0\} |.$$
Since $\widetilde u\ge \bar u$, we can apply the hypothesis \eqref{hp.optk.g} to the left-hand side, which yields
$$-\int_{\widetilde{\O}}\bar{u}f\,dx-\left(-\int_{\widetilde{\O}}\widetilde{u}f\,dx\right)\le\frac{1}{c}\Big[\big|\{\widetilde{u}>0\}\big|-\big|\{\bar{u}>0\}\big|\Big].$$
Now since 
$$\int_{\widetilde\O}|\nabla\widetilde u|^p\,dx=\int_{\widetilde\O}f\widetilde u\,dx\qquad\text{and}\qquad \int_{\widetilde\O}|\nabla\bar u|^p\,dx=\int_{\widetilde\O}f\bar u\,dx,$$
we can re-write the above inequality as 
$$\left(\int_{\widetilde{\O}}\left(\frac1p|\nabla\bar{u}|^p-\bar{u}f\right)\,dx\right)-\left(\int_{\widetilde{\O}}\left(\frac1p|\nabla\widetilde{u}|^p - \widetilde{u}f\right)\,dx\right)\le\frac{p-1}{pc}\Big[|\{\widetilde{u}>0\}|-|\{\bar{u}>0\}|\Big].$$
Since $\widetilde{u}$ is the minimizer of the functional 
$$u\mapsto \int_{\widetilde{\O}} \left(\frac1p|\nabla{u}|^p-{u}f\right)\,dx$$
on $W^{1,p}_0(\widetilde \O)$, we get that 
$$ \left( \int_{\widetilde{\O}} \left(\frac1p|\nabla\bar{u}|^p-\bar{u}f\right)\,dx\right) - \left(\int_{\widetilde{\O}} \left( \frac1p|\nabla w|^p - wf \right) \, dx \right) \le \frac{1}{c} \Big[|\{ w>0\} | - |\{ \bar{u}>0\} | \Big].$$
In particular, for any ball $B_r\subset D$, we can take the function $w$ defined by
\[ \begin{cases}
- \Delta_p w = f & \text{in $B_r$},\\
w = \bar{u} & \text{on $D\setminus \partial B_r$}.
\end{cases}\]
Thus, the last inequality can be rewritten as 
$$\frac1p\int_{B_r} \left( |\nabla \bar{u}|^p - |\nabla w|^p \right) \, dx \le \int_{B_r} f(\bar{u}-w) \, dx + \frac{1}{c} | B_r \cap \{\bar{u}=0\}|.$$
To estimate the first integral on the right-hand side we apply H\"{o}lder inequality twice (since $f \in L^q$ is the assumption we want to use) and obtain
\[ \begin{aligned}
\int_{B_r} f(\bar{u}-w) \, dx & \le \| \bar{u}-w\|_{L^{p^\ast}} \left( \int_{B_r} f^{dp/(dp+p-d)}\right)^{1 + 1/d-1/p}
\\ & \le C \| \bar{u}-w\|_{L^{p^\ast}}\|f\|_{L^q} r^{\alpha_p},
\end{aligned} \]
where $C$ is a dimensional constant related to the volume of the unit ball in $\R^d$ and
$$\alpha_p := \frac{dp + p - d}{dp} - \frac{1}{q}.$$
Now, following \cite[Section 3]{ddap1}, we have for $p \ge 2$ the inequality
$$\int_{B_r} | \nabla (\bar{u}-w)|^p \, dx \le C \left[ \| \bar{u}-w\|_{L^{p^\ast}}\|f\|_{L^q} r^{\alpha_p} + \int_{B_r}\mathds{1}_{\{ \bar{u}=0 \}}(x) \, dx \right],$$
while for $1 < p \le 2$ we have
$$ \int_{B_r} | \nabla (\bar{u}-w)|^p \, dx \le C \left[ \| \bar{u}-w\|_{L^{2^\ast}}\|f\|_{L^q} r^{\alpha_2} + \int_{B_r} \mathds{1}_{ \{ \bar{u}=0 \}}(x)\,dx\right]^{p/2} \left[ \int_{B_r} |\nabla \bar{u}|^p \, dx \right]^{1-p/2}.$$
The conclusion now follows by a similar argument as in \cite[Lemma 3.1]{ddap1}.
\end{proof}

\section{Minimum problem on $\gamma$-compact classes}\label{scpt}

In general, without the assumptions we have seen in Sections \ref{spgd} and \ref{spld}, the existence of an optimal set $\O$ may fail if we put no geometrical restriction on the class $\A$ of admissible competitors. On the contrary, adding extra a priori geometrical constraints may lead to existence results under very general assumptions. There are numerous different classes of admissible domains in the literature, in which the existence of optimal domains is well known. In this section, we briefly recall some of the most commonly used geometric constraints. \medskip

\noindent The following classes have been considered in \cite{BB05}.
\begin{enumerate}[$-$]
\item The class $\A_{convex}$ of convex sets $\O\subset D$.\smallskip
\item The class $\A_{unif\;cone}$ of domains $\O$ satisfying a uniform exterior cone property, which means that for every point $x_0\in\partial\O$ there is a closed cone, with uniform (independent of $\O$) height and opening, and with vertex in $x_0$, lying in the complement of $\O$.\smallskip
\item The class $\A_{unif\;flat\;cone}$ of domains $\O$ satisfying a uniform flat cone condition, i.e. as above, but the cone may be $k$-flat, namely of dimension $k$, with $k>d-p$.\smallskip
\item The class $\A_{cap\;density}$ of domains $\O$ satisfying a uniform capacitary density condition, i.e. such that there exist $c,r>0$ (independent of $\O$) with the property that for every $x\in\partial\O$, we have
\[
\frac{\cp_p\left(B_t(x)\sm\O,B_{2t}(x)\right)}{\cp_p\left(B_t(x),B_{2t}(x)\right)}\ge c\qquad \text{for all $t\in(0,r)$},
\]
where $B_s(x)$ denotes the ball of radius $s$ centered at $x$, and $\cp_p$ denotes the $p$ capacity.\smallskip
\item The class $\A_{unif\;Wiener}$ of domains $\O$ satisfying a uniform Wiener condition, i.e. such that for every $x\in\partial\O$ and for every $0<r<R<1$
\[
\int_r^R\bigg[\frac{\cp_p\left(B_t(x)\sm\O,B_{2t}(x)\right)}{\cp_p\left(B_t(x),B_{2t}(x)\right)}\bigg]^{1/(p-1)}\frac{dt}{t}\ge g(r,R,x),
\]
where $g:(0,1)\times(0,1)\times D\to\R^+$ is fixed (independent of $\O$) and has the property that for every $R\in(0,1)$
\[
\lim_{r\to0}g(r,R,x)=+\infty \quad \text{locally uniformly on $x$}.
\]
\end{enumerate}

The following inclusions hold:
\[
\A_{convex}\subset\A_{unif\;cone}\subset\A_{unif\;flat\;cone}\subset\A_{cap\;density}\subset\A_{unif\;Wiener}\;.
\]
In addition, in the classes above the $\gamma$-convergence for a sequence $(\O_n)$ of domains is equivalent to the Hausdorff convergence of the sets $K_n=\overline D\sm\O_n$, and is also called Hausdorff complementary convergence, it is denoted by $H^c$ and is associated to the distance
\[
d_{H^c}(\O_1,\O_2)=d_H(\O_1^c,\O_2^c)
\]
where $d_H$ is the usual Hausdorff distance
\[
d_H(K_1,K_2)=\sup_{x\in K_1}\left[\inf_{y\in K_2}|x-y|\right]\vee\sup_{x\in K_2}\left[\inf_{y\in K_1}|x-y|\right].
\]
In particular, the properties below, which are well-known (see for instance \cite{HP18}) for the Hausdorff convergence on the class
\[
\A=\left\{\O \subset D \: : \: \text{$\O$ open} \right\}
\]
also hold for the classes above endowed with the $\gamma$-convergence:
\begin{enumerate}[label=(\roman*)]
\item $(\A,d_{H^c})$ is a compact metric space;
\item if $\O_n\to\O$ in the $H^c$ convergence, then for every compact set $K\subset\O$, there exists $n_K\in\N$ such that $K\subset\O_n$ for every $n\ge n_K$;
\item the Lebesgue measure is lower semicontinuous for the $H^c$-convergence;
\item the map that associates to a set $\O$ the number of connected components of the set $\overline D\sm\O$ is lower semicontinuous with respect to the $H^c$-convergence.
\end{enumerate}

Another interesting class, which is only of topological type and is not contained in any of the previous ones, was considered by \v{S}ver\'ak in \cite{SV93} and consists of all open subsets $\O$ of $D$ for which the number of connected components of $\overline D\sm\O$ is uniformly bounded. When for $p=d=2$ it is proved that in the class
$${\mathcal O}_k=\left\{\O\subset D\::\:\text{$\O$ open},\ \#\Big(\overline D\sm\O\Big)\le k \right\},$$
where $\#$ denotes the number of connected components, the Hausdorff complementary convergence implies the $\gamma$-convergence.

For higher dimensions the result in the form above is false in general, and capacity properties play an important role. When $p>d$ a point has a strictly positive $p$-capacity, and Theorem \ref{t:pmagd} gives the existence of optimal shapes under very mild assumptions. When $p\le d$ the generalization of the \v{S}ver\'ak result to higher dimensions was obtained in \cite{BT98} where the theorem below is proved.
 
\begin{theo}
Let $d-1<p\le d$. If a sequence $(\O_n)$ converges in the Hausdorff complementary topology to $\O$ and $\O_n$ belong to the class ${\mathcal O}_k$ above, then $\O_n$ $\gamma_p$-converges to $\O$ and $\O\in{\mathcal O}_k$.
\end{theo}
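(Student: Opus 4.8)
The plan is to reduce the statement, via capacity/potential-theoretic compactness, to the classical \v{S}ver\'ak–Bucur--Trebeschi machinery. The heart of the matter is that, in the range $d-1<p\le d$, a connected closed set of positive diameter carries a definite amount of $p$-capacity — this is the dimensional reason the result holds here but fails for $p\le d-1$, where curves and lower-dimensional pieces are $p$-negligible and can disappear in the limit. So the first step is to record this capacity lower bound: if $C\subset\overline D$ is connected with $\diam C\ge t$, then $\cp_p(C,B)\ge \omega(t)$ for a modulus $\omega$ depending only on $d,p$ and the fixed ball $B\supset\overline D$. I would cite this from \cite{BT98} (it is the key lemma there), the point being that $p>d-1$ makes a segment have positive $p$-capacity.

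Next I would set up the compactness. Write $K_n=\overline D\sm\O_n$, with $K_n$ having at most $k$ connected components; by assumption $\O_n\to\O$ in $H^c$, i.e. $K_n\to K:=\overline D\sm\O$ in the Hausdorff distance. By property (iv) recalled in the excerpt (lower semicontinuity of the number of connected components of the complement under $H^c$-convergence), $K$ has at most $k$ components as well, so $\O\in\mathcal O_k$; that disposes of the second claim. For the $\gamma_p$-convergence, I would use the metrizability and compactness of $(\M_p(D),d_{\gamma_p})$ stated in the excerpt: pass to a subsequence along which $\infty_{K_n}$ $\gamma_p$-converges to some $\mu\in\M_p(D)$. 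The task is then to identify $\mu=\infty_{K}$, which by the uniqueness of $\gamma_p$-limits will upgrade the statement to the full sequence.

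The identification splits into two inequalities between $\mu$ and $\infty_K$. For $\mu\ge$ (in the sense of the associated Sobolev spaces) $\infty_K$, one uses that any compact $\widehat K\subset\O=\{\,\mu<\infty\text{-region}\}$ is, by the Hausdorff convergence, contained in $\O_n$ for $n$ large (property (ii)), so test functions supported in $\widehat K$ are admissible for all the $\O_n$ and survive to the limit — this gives $u_{\mu,1}\le u_{\infty_K,1}$ pointwise via the monotonicity of Proposition \ref{monotonia}, hence $\{u_{\mu,1}>0\}\subset \O$ up to capacity. The reverse inclusion is exactly where the capacity estimate enters and is the main obstacle: one must show that mass of $\infty_{K_n}$ cannot concentrate on a $p$-negligible set. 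Here I would argue that if $x_0\in K=\lim K_n$, then each $K_n$ has a connected component meeting a small ball $B_r(x_0)$, and by Hausdorff convergence that component either stays of diameter $\gtrsim r$ inside $B_{2r}(x_0)$ or is chained (through the at-most-$k$ components) to $\partial D$; in either case the capacity lower bound $\omega(r)$ from the first step forces $\cp_p(K_n\cap B_{2r}(x_0),B_{4r}(x_0))\ge \omega(r)>0$ uniformly. This uniform capacitary density, together with the Hausdorff convergence, is the standard hypothesis (the "$\gamma_p$-stability of uniformly fat sets", cf. \cite{BB05}, Ch.~4–5, or \cite{bucve}) under which $\infty_{K_n}\xrightarrow{\gamma_p}\infty_K$, and I would invoke that stability result to conclude $\mu=\infty_K$. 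The genuinely delicate point is the bookkeeping of connected components near each limit point: one has at most $k$ of them, each closed, and one must rule out the scenario where a component shrinks to a point while another expands — controlled precisely because $k$ is fixed and $H^c$-limits of connected sets of bounded number remain connected-component-bounded (property (iv)). Once $\mu=\infty_K$ is established, $d_{\gamma_p}(\infty_{K_n},\infty_K)\to0$ along the subsequence, and since the limit is subsequence-independent the whole sequence $\gamma_p$-converges, completing the proof.
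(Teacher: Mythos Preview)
First, note that the paper does not actually prove this theorem: it is quoted as a result of Bucur and Trebeschi \cite{BT98}, so there is no ``paper's own proof'' to compare against. Your outline is broadly faithful to the \v{S}ver\'ak--Bucur--Trebeschi strategy, and you correctly single out the decisive dimensional fact: for $p>d-1$ a connected compact set of diameter at least $t$ has $p$-capacity bounded below by a modulus $\omega(t)$.

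There is, however, a concrete error in the identification step: you have the two inequalities swapped. The compact-subsets property (ii) gives the \emph{easy} direction, and it points the other way: if $\widehat K\Subset\Omega$ then $\widehat K\subset\Omega_n$ for $n$ large, so $C_c^\infty(\Omega)$ is eventually contained in $W^{1,p}_0(\Omega_n)$, and the Mosco/$\gamma_p$ characterization yields $W^{1,p}_0(\Omega)\subset W^{1,p}_\mu$, i.e.\ $\mu\le\infty_K$ and hence $u_{\mu,1}\ge u_{\infty_K,1}$ by Proposition~\ref{monotonia}. What you wrote as the easy consequence --- $u_{\mu,1}\le u_{\infty_K,1}$, equivalently $\{u_{\mu,1}>0\}\subset\Omega$ --- is exactly the \emph{hard} direction $\mu\ge\infty_K$: one has to show that weak $W^{1,p}$-limits of functions vanishing on $K_n$ vanish $p$-q.e.\ on $K$, and this is where the diameter--capacity lower bound for connected components (together with the bound $\#\le k$, which prevents all the capacity from escaping into shrinking pieces) is genuinely used. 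So the ingredients in your sketch are the right ones, but they are attached to the wrong inequalities; as written the argument would not go through. A minor bibliographic point: the stability statement you invoke at the end is in \cite{BT98} and \cite{BB05}; the reference \cite{bucve} concerns multiphase problems and is not the pertinent one here.
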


As a consequence of this theorem, a large class of shape optimization problems admit solutions in the classes ${\mathcal O}_k$ above. We summarize this in the following proposition.

\begin{coro}
Let $d-1<p\le d$ and assume that the cost integrand $j(x,s)$ satisfies the assumption:
\[
-a(x)-c|s|^q\le j(x,s)\quad\text{with }a\in L^1(D),\ c>0,\ q<\frac{dp}{d-p}\quad\text{(any $q<+\infty$ if }p=d).
\]
Then for every integer $k$ the shape optimization problem
\[
\min\left\{\int_Dj(x,u_\O)\,dx\: :\: \O\in{\mathcal O}_k\right\}
\]
admits a solution, where $u_\O$ denotes the solution of \eqref{e:stateq}.
\end{coro}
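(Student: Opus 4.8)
The plan is to run the direct method of the calculus of variations, with the theorem above (the \v{S}ver\'ak--Bucur--Trebeschi result of \cite{BT98}) as the only non-elementary input. Fix $k\in\N$ and let $(\O_n)\subset{\mathcal O}_k$ be a minimizing sequence for $\O\mapsto\int_Dj(x,u_\O)\,dx$. This makes sense because the functional is bounded from below on ${\mathcal O}_k$: testing \eqref{e:stateq} with $u_\O$ and using Poincar\'e's inequality exactly as in the proof of Theorem~\ref{t:pmagd} gives a bound on $\|u_\O\|_{W^{1,p}(D)}$ depending only on $\|f\|_{W^{-1,p'}(D)}$, and then the Sobolev embedding $W^{1,p}_0(D)\hookrightarrow L^q(D)$ together with the growth assumption $-a(x)-c|s|^q\le j(x,s)$ gives $\int_Dj(x,u_\O)\,dx\ge-\|a\|_{L^1(D)}-cC(\|f\|_{W^{-1,p'}(D)})$ uniformly in $\O$ (if the infimum is $+\infty$ the statement is trivial). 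Since $({\mathcal O}_k,d_{H^c})$ sits inside the compact metric space $(\A,d_{H^c})$, after passing to a subsequence I may assume $\O_n\to\O$ in the Hausdorff complementary topology, and the theorem above then upgrades this to $\gamma_p$-convergence of $\O_n$ to $\O$, with $\O\in{\mathcal O}_k$; in particular $\O$ is an admissible competitor and, by the remark after Definition~\ref{gammap}, the states converge: $u_{\O_n}\to u_\O$ in $L^p(D)$, where $u_\O$ is the solution of \eqref{e:stateq} (equivalently, $u_{\infty_{D\sm\O},f}$ in the notation of \eqref{statrel}).

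It then remains to establish the lower semicontinuity inequality $\int_Dj(x,u_\O)\,dx\le\liminf_n\int_Dj(x,u_{\O_n})\,dx$. First I would strengthen the convergence of the states exactly as in the proof of Theorem~\ref{exrel}: the $W^{1,p}(D)$-bound above makes $(u_{\O_n})$ weakly precompact in $W^{1,p}(D)$ and hence, since $D$ has finite measure and $q<dp/(d-p)$ (resp. $q<\infty$ when $p=d$), strongly precompact in $L^q(D)$; the limit must be $u_\O$ by uniqueness of the $L^p$-limit, so along a further subsequence $u_{\O_n}\to u_\O$ a.e.\ in $D$ and $|u_{\O_n}|^q\to|u_\O|^q$ in $L^1(D)$. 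Then I would apply Fatou's lemma to the nonnegative functions $h_n(x):=j(x,u_{\O_n}(x))+a(x)+c|u_{\O_n}(x)|^q$, using that $s\mapsto j(x,s)$ is lower semicontinuous (so $\liminf_n h_n(x)\ge j(x,u_\O(x))+a(x)+c|u_\O(x)|^q$ a.e.), and afterwards subtract the finite, convergent quantities $\int_Da\,dx$ and $c\int_D|u_{\O_n}|^q\,dx$; this yields the desired inequality. Combined with the admissibility of $\O$, this shows $\O$ is a minimizer, which proves the corollary.

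I do not expect a genuine obstacle: the one delicate fact --- that $H^c$-convergence forces $\gamma_p$-convergence and stays inside ${\mathcal O}_k$ when $d-1<p\le d$ --- is precisely the content of the theorem quoted just above, which we may use freely. The rest is the standard compactness/lower-semicontinuity scheme, and the only place where the hypotheses of the corollary enter quantitatively is the compactness of the embedding $W^{1,p}_0(D)\hookrightarrow L^q(D)$, for which the subcriticality $q<dp/(d-p)$ is exactly the sharp condition. The sole point requiring a little care is that $f$ be in the dual of the relevant Sobolev space, so that $u_\O$ and $u_{\O_n}$ are well defined and $\gamma_p$-convergence of the measures transfers to convergence of the states; this is guaranteed since $f\in W^{-1,p'}(D)$.
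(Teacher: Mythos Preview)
Your proposal is correct and is exactly the argument the paper has in mind: the corollary is stated without proof, as an immediate consequence of the \v{S}ver\'ak--Bucur--Trebeschi theorem combined with the compactness/lower-semicontinuity scheme already used in Theorem~\ref{exrel}. Your write-up makes this explicit, including the Fatou argument with the shifted integrand $j(x,u_{\O_n})+a(x)+c|u_{\O_n}|^q$, which is precisely how the lower bound hypothesis is meant to be used.
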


\section{Finite perimeter of optimal sets}\label{scfinper}

In this section we prove that under very mild assumptions the optimal open set $\O_\opt$ has finite perimeter. Notice that the model case
$$j(x,s)=-g(x)s+\mathds{1}_{\{s>0\}}\qquad\text{with }g(x)\ge0$$
is well within the assumptions ({\romannumeral 1}) and ({\romannumeral 2}) of Theorem \ref{t:pfin}.

\begin{proof}[Proof of Theorem \ref{t:pfin}]
Let $\O_\opt := \{ \bar{u} > 0\}$, where $\bar{u}$ is a solution of the auxiliary minimization problem
$$\min\Big\{\int_D \left( j_0(x,u) + \mathds{1}_{\{u>0\}}(x) \right) \,dx\ :\ \Delta_p u+f\ge0,\ u\in W_0^{1,p}(D)\Big\}.$$
Then, for any given $\varphi \in W_0^{1,p}(D)$ nonnegative ($\varphi \ge 0$) test function, we have
$$ \langle \Delta_p \bar{u} + f, \varphi \rangle \ge 0,$$
which, integrating by parts, leads to
$$ \left\langle f, \varphi \right\rangle \ge \left\langle |\nabla \bar{u}|^{p-2} \nabla \bar{u}, \nabla \varphi \right\rangle.$$
Let $\eps > 0$ be small enough for \eqref{eq.pf1} or \eqref{eq.pf2} to hold and set $\O_\eps := \{0 < \bar{u} < \eps\}$. If we take as a test function $\varphi := \bar{u} \wedge \eps$, then 
$$ \left\langle \nabla \bar{u}, \nabla \varphi \right\rangle = \int_{D}|\nabla \bar{u}|^{p-2} \nabla \bar{u} \cdot \nabla (\bar{u} \wedge \eps) \, dx = \int_{{\O}_\eps} |\nabla \bar{u}|^p \, dx$$
and
$$ \left\langle f, \varphi \right\rangle = \int_{D} f (\bar{u} \wedge \eps) \, dx \le \int_{{\O}_\eps} f\bar{u} \, dx + \int_{D \sm {\O}_\eps} f \eps \, dx \le C \eps.$$
Putting these two together yields
\be \label{eq.pf3}
\int_{{\O}_\eps} |\nabla \bar{u}|^p \, dx \le C \eps,
\ee
where we use the letter $C$ to denote a positive constant which may vary from line to line. Now we use the minimality of $\bar{u}$ and pick $\bar{v}:= (\bar{u}-\eps)^+$ as a competitor. It is easy to see that $\bar{v}$ satisfies the condition $\Delta_p\bar{v}+f\ge0$, so that
$$ \int_D\left( j_0(x,\bar{u})+\mathds{1}_{\{\bar{u}>0\}}(x) \right)\,dx\le\int_D\left( j_0(x,\bar{v})+\mathds{1}_{\{\bar{v}>0\}}(x) \right)\,dx. $$
Therefore we get
$$ \int_{{\O}_\eps} \left( j_0(x,\bar{u}) + 1 \right) \, dx + \int_{{\O}\sm {\O}_\eps} j_0(x,\bar{u}) \, dx \le \int_{{\O}\sm {\O}_\eps} j_0(x, \bar{u}-\eps) \, dx. $$
To use the assumptions \eqref{eq.pf1} or \eqref{eq.pf2} we move everything on the right-hand side and then add the integral of $j_0(x,0)$ on ${\O}_\eps$ on both sides obtaining
\[ \begin{aligned}
\int_{\O_\eps}1\, dx&=\int_{\O_\eps} \left( 1 + j_0(x,0) \right) \, dx
\\ & \le \int_{{\O}\sm {\O}_\eps} \left[j_0(x, \bar{u}-\eps)-j_0(x,\bar{u}) \right] \, dx + \int_{{\O}_\eps} \left[ j_0(x,0)-j_0(x,\bar{u}) \right] \, dx,
\end{aligned} \]
where the first equality follows from assumption \eqref{eq.jpos}. 

Suppose now that we are in the case ({\romannumeral 1}) of Theorem \ref{t:pfin}. Then, we can use the above estimate in the following way:
$$ \int_{{\O}_\eps} |j_0(x,0) - j_0(x,\bar{u})| \, dx \le \int_{{\O}_\eps} a(x) \bar{u} \, dx \le C\eps $$
since $\bar{u} < \eps$ on $\O_\eps$ by definition and $a \in L^1(D)$; similarly, we have
$$ \int_{{\O}\sm {\O}_\eps} \left| j_0(x, \bar{u}-\eps)-j_0(x,\bar{u}) \right| \, dx \le \int_{{\O}\sm {\O}_\eps} \eps \left[ a(x) + c|\bar{u}|^{p^\ast} \right] \, dx \le C\eps, $$
as a consequence of the fact that for $u \in W_0^{1,p}(D)$ we can apply Sobolev embedding theorem. On the other hand, on the left-hand side we have
$$ \int_{{\O}_\eps}1\, dx = |\O_\eps|, $$
so putting everything together yields
\be \label{eq.pf4}
| \bar{\O}_\eps | \le C\eps. 
\ee
If we are in the case ({\romannumeral 2}), then we simply notice that $\bar{u}$ is bounded (it follows from Theorem \ref{t:pmagd} for $p > d$ and Lemma \ref{lemma.bdd1} for $p\le d$); therefore, the estimates are
\[ \begin{aligned}
& \int_{{\O}_\eps} |j_0(x,0) - j_0(x,\bar{u})| \, dx \le \int_{\O_\eps}a(x,\bar{u}) \bar{u} \, dx \le \eps C\|\bar{u}\|_\infty,
\\ &\int_{{\O}\sm {\O}_\eps} \left| j_0(x, \bar{u}-\eps)-j_0(x,\bar{u}) \right| \, dx \le \int_{{\O}\sm {\O}_\eps} \eps \left[ a(x,\bar{u}) \right] \, dx \le \eps C\|\bar{u}\|_\infty,
\end{aligned}\]
which means that only the constants in front of $\eps$ are different from case ({\romannumeral 1}). 

If we now put \eqref{eq.pf3} and \eqref{eq.pf4} together and apply H\"older's inequality we get
$$ \int_{\O_\eps}|\nabla\bar{u}|\,dx \le \left[\int_{\O_\eps}|\nabla\bar{u}|^p\,dx \right]^{1/p}|\O_\eps|^{1/p'}\le (C\eps)^{1/p}(C\eps)^{1/p'} = C\eps, $$
so we can conclude that ${\O}$ has finite perimeter using the argument from \cite{bulbk}. More precisely, the coarea formula gives
$$ \int_0^\eps \HH^{d-1}(\partial^\ast\{\bar{u}>\eps\})\,dt\le C\eps $$
and hence there exists $\delta_n \xrightarrow{n\to \infty} 0$ such that
$$\HH^{d-1}(\partial^\ast\{\bar{u}>\delta_n\})\le C\qquad\text{for all }n\in\N,$$
from which it follows that
$$\per({\O})=\HH^{d-1}(\partial^\ast\{\bar{u}>0\})\le C,$$
and this concludes the proof.
\end{proof}

\bigskip\noindent{\bf Acknowledgments.} The work of GB is part of the project 2017TEXA3H {\it``Gradient flows, Optimal Transport and Metric Measure Structures''} funded by the Italian Ministry of Research and University. The first author is member of the Gruppo Nazionale per l'Analisi Matematica, la Probabilit\`a e le loro Applicazioni (GNAMPA) of the Istituto Nazionale di Alta Matematica (INdAM). BV has been supported by the European Research Council (ERC), under the European Union’s Horizon 2020 research and innovation programme, through the project ERC VAREG - \it Variational approach to the regularity of the free boundaries \rm (grant agreement No. 853404).

\bigskip

\bigskip
{\small\noindent
Giuseppe Buttazzo:
Dipartimento di Matematica, Universit\`a di Pisa\\
Largo B. Pontecorvo 5, 56127 Pisa - ITALY\\
{\tt giuseppe.buttazzo@unipi.it}\\
{\tt http://www.dm.unipi.it/pages/buttazzo/}

\bigskip\noindent
Francesco Paolo Maiale:
Scuola Normale Superiore\\
Piazza dei Cavalieri 7, 56126 Pisa - ITALY\\
{\tt francesco.maiale@sns.it}\\
{\tt https://sites.google.com/site/francescopaolomaiale/}

\bigskip\noindent
Bozhidar Velichkov:
Dipartimento di Matematica, Universit\`a di Pisa\\
Largo B. Pontecorvo 5, 56127 Pisa - ITALY\\
{\tt bozhidar.velichkov@unipi.it}\\
{\tt http://www.velichkov.it/index.html}

\end{document}